\newtheorem{theorem}{Theorem}
\numberwithin{theorem}{section}
\newtheorem{corollary}[theorem]{Corollary}
\newtheorem{lemma}[theorem]{Lemma}
\newtheorem{proposition}[theorem]{Proposition}
{\theoremstyle{definition}
\newtheorem{definition}[theorem]{Definition}
\newtheorem{remark}[theorem]{Remark}

}
\newcommand{\isigma}{\mathbf{I\Sigma}}
\newcommand{\pa}{\mathbf{PA}}
\newcommand{\ph}{\operatorname{PH}}
\newcommand{\pr}{\operatorname{Pr}}
\newcommand{\pro}{\operatorname{Proof}}
\newcommand{\con}{\operatorname{Con}}
\newcommand{\feps}{F_{\varepsilon_0}}
\newcommand{\true}{\operatorname{True}}
\newcommand{\drfn}{\operatorname{RFN}_{\Sigma_1}^\diamond(\pa)}
\newcommand{\rfn}{\operatorname{RFN}_{\Sigma_1}}
\newcommand{\dn}{\operatorname{N^\diamond}}
\newcommand{\n}{\operatorname{N}}
\newcommand{\dotminus}{\mathbin{\text{\@dotminus}}}
\newcommand{\@dotminus}{%
  \ooalign{\hidewidth\raise1ex\hbox{.}\hidewidth\cr$\m@th-$\cr}%
}
\title[Proof Lengths for Instances of the Paris-Harrington Principle]{Proof Lengths for Instances of the Paris-Harrington Principle\footnotemark[1]}
\author{Anton Freund}
\begin{document}

\begin{abstract}
As Paris and Harrington have famously shown, Peano Arithmetic does not prove that for all numbers $k,m,n$ there is an $N$ which satisfies the statement $\ph(k,m,n,N)$: For any $k$-colouring of its $n$-element subsets the set $\{0,\dots,N-1\}$ has a large homogeneous subset of size $\geq m$. At the same time very weak theories can establish the $\Sigma_1$-statement $\exists_N\ph(\overline k,\overline m,\overline n,N)$ for any fixed parameters $k,m,n$. Which theory, then, does it take to formalize \emph{natural} proofs of these instances? It is known that $\forall_m\exists_N\ph(\overline k,m,\overline n,N)$ has a natural and short proof (relative to $n$ and $k$) by $\Sigma_{n-1}$-induction. In contrast, we show that there is an elementary function $e$ such that any proof of $\exists_N\ph (\overline{e(n)},\overline{n+1},\overline n,N)$ by $\Sigma_{n-2}$-induction is ridiculously long.\\
In order to establish this result on proof lengths we give a computational analysis of slow provability, a notion introduced by Sy-David Friedman, Rathjen and Weiermann. We will see that slow uniform $\Sigma_1$-reflection is related to a function that has a considerably lower growth rate than $\feps$ but dominates all functions $F_\alpha$ with $\alpha<\varepsilon_0$ in the fast-growing hierarchy.
\end{abstract}

\maketitle
{\let\thefootnote\relax\footnotetext{\copyright~2017. This manuscript version is made available under the CC-BY-NC-ND 4.0 license \url{http://creativecommons.org/licenses/by-nc-nd/4.0/}. The paper has been accepted for publication in the Annals of Pure and Applied Logic (doi 10.1016/j.apal.2017.01.004).}}

We recall some terminology from \cite{paris-harrington}: For a set $X$ and a natural number $n$ we write $[X]^n$ for the collection of subsets of $X$ with precisely $n$ elements. Given a function $f$ with domain $[X]^n$, a subset $Y$ of $X$ is called homogeneous for $f$ if the restriction of $f$ to the set $[Y]^n$ is constant. A non-empty subset of $\mathbb N$ is called large if its cardinality is at least as big as its minimal element. Where the context suggests it we use $N$ to denote the set $\{0,\dots ,N-1\}$. Then the Paris-Harrington Principle, or Strengthened Finite Ramsey Theorem, expresses that for all natural numbers $k,m,n$ there is an $N$ such that the following statement holds:
\begin{equation*}
\ph (k,m,n,N)\quad:\equiv\quad\parbox{8cm}{``for any function $[N]^n\rightarrow k$ the set $N$ has a large homogeneous subset with at least $m$ elements"}
\end{equation*}
Using the methods presented in \cite[Section I.1(b)]{hajek91} it is easy to formalize the statement $\ph (k,m,n,N)$ in the language of first order arithmetic, as a formula that is $\Delta_1$ in the theory $\isigma_1$ of $\Sigma_1$-induction. The celebrated result of \cite{paris-harrington} says that the formula $\forall_{k,m,n}\exists_N\ph (k,m,n,N)$ is true but unprovable in Peano Arithmetic.\\
As is well-known, any true $\Sigma_1$-formula in the language of first-order arithmetic can be proved in a theory as weak as Robinson Arithmetic. It is thus pointless to ask whether a $\Sigma_1$-sentence is provable in a sound arithmetical theory, in contrast to the situation for $\Pi_1$-sentences (cf.\ G\"odel's Theorems) and $\Pi_2$-sentences (provably total functions). What we can sensibly ask is whether a $\Sigma_1$-sentence has a proof with some additional property. The present paper explores this question for instances $\exists_N\ph (\overline k,\overline m,\overline n,N)$ of the Paris-Harrington Principle. Our principal result states that, for some elementary function $e$, the following holds:
\begin{equation}\label{eq:no-short-proofs-ph}
 \parbox{11cm}{For sufficiently large $n$, no proof of the formula $\exists_N\ph (\overline{e(n)},\overline{n+1},\overline n,N)$ in the theory $\isigma_{n-2}$ can have G\"odel number smaller than $\feps(n-3)$.}
\end{equation}
If we replace $\isigma_{n-2}$ by $\isigma_{n-3}$ (and $\feps(n-3)$ by $\feps(n-4)$) then we can take the constant function $e(n)=8$. It is open whether we can make $e$ constant and keep the stronger fragment $\isigma_{n-2}$.\\
Recall that $\feps$ is the function at stage $\varepsilon_0$ of the fast-growing hierarchy. Ketonen and Solovay in \cite{solovay81} have related it to the function that maps $(k,m,n)$ to the smallest witness $N$ which makes the statement $\ph (\overline k,\overline m,\overline n,\overline N)$ true. A classical result due to Kreisel, Wainer and Schwichtenberg \cite{kreisel52,wainer70,schwichtenberg71} says that $\feps$ eventually dominates any provably total function of Peano Arithmetic. Similar to (\ref{eq:no-short-proofs-ph}) we will show that the $\Sigma_1$-formula $\exists_y\,\feps(\overline n)=y$ has no short proof in the theory $\isigma_n$.\\
By \cite[Theorem II.1.9]{hajek91} the formula $\forall_m\exists_N\ph (\overline k,m,\overline n,N)$ is provable in $\isigma_{n-1}$, for each fixed $n\geq 2$ and $k$. The proofs of these instances formalize perfectly natural mathematical arguments. According to \cite[Section II.2(c)]{hajek91} they can be constructed in the meta-theory $\isigma_1$. Since all provably total functions of $\isigma_1$ are primitive recursive, this complements (\ref{eq:no-short-proofs-ph}) by the following statement:
\begin{equation}\label{eq:short-proofs-ph}
 \parbox{11cm}{There is a primitive recursive function which maps $(k,n)$ with $n\geq 2$ to a proof of the formula $\forall_m\exists_N\ph (\overline k,m,\overline n,N)$ in the theory $\isigma_{n-1}$.}
\end{equation}
Similarly, a primitive recursive construction yields proofs of $\exists_y\,\feps(\overline n)=y$ in the theories $\isigma_{n+1}$: In view of $\feps(x)\simeq F_{\omega_{x+1}}(x)=F_{\omega_x^{x+1}}(x)$ it suffices to prove the statements ``$F_{\omega_n^{n+1}}$ is total''. This is done by $\Pi_2$-induction up to $\omega_n^{n+1}$, which is available in $\isigma_{n+1}$ by Gentzen's classical construction (cf.\ \cite[Theorem 4.11]{wainer-fairtlough-98}).\\

We argue that (\ref{eq:no-short-proofs-ph}) is not only a result about proof length, but also about the existence of natural proofs: Observe first that we are concerned with sequences $p_n$ of proofs for a sequence of parametrized statements $A_n$, rather than with a single proof of a single statement. Under which conditions can such a sequence of proofs follow an intelligible uniform proof idea? It is the role of the proofs $p_n$ to guarantee that the formulas $A_n$ are true. On the other hand the statement ``the given proof idea leads to formally correct proofs $p_n$ of the statements $A_n$'' should, we believe, be justified by fairly elementary means. Since elementary means cannot prove the totality of functions with a high growth rate this implies that the function mapping $n$ to (a code of) the proof $p_n$ cannot grow too fast. In this sense (\ref{eq:no-short-proofs-ph}) shows that $\isigma_{n-2}$-proofs of the Paris-Harrington Principle for arity $n$ and $e(n)$ colours cannot follow a natural proof idea. The author sees no formal condition which would, on the positive side, ensure that a sequence of proofs is natural. On an informal level the construction which establishes \cite[Theorem II.1.9]{hajek91} appears to provide natural $\isigma_{n-1}$-proofs of the statements $\forall_m\exists_N\ph (\overline k,m,\overline n,N)$.\\

Let us briefly discuss connections with a line of research initiated by Harvey Friedman: Theorem 15 in \cite{friedman-proof-length} says that any proof of a certain $\Sigma^0_1$-statement in the theory $\Pi^1_2\text{-BI}_0$ must have at least $2_{1000}$ (i.e.\ $1000$ iterated exponentials to the base $2$) symbols. Obviously this goes much further than our result insofar as it involves a much stronger theory. However, there is also a more conceptual difference: Friedman's statement can, in principle, be verified explicitly (by looking at all possible proofs with less than $2_{1000}$ symbols) and is thus finitistically meaningful. In contrast, our statement (\ref{eq:no-short-proofs-ph}) involves an unbounded existential quantifier, implicit in the phrase ``sufficiently large". It is conceivable that any witness to this existential quantifier is so huge that statement (\ref{eq:no-short-proofs-ph}) does not have ``practical significance". On the other hand the more abstract form of (\ref{eq:no-short-proofs-ph}) has the important advantage of making the statement more robust: A result like \cite[Theorem 15]{friedman-proof-length} requires concrete numerical bounds which might depend on the formalization and are difficult to establish in full detail. To prove claim (\ref{eq:no-short-proofs-ph}), on the other hand, we can rely on the more robust concept of growth rates. How exactly we arithmetize the relation ``$p$ codes a proof of the statement with G\"odel number $\varphi$ in the theory $\isigma_n$" will not matter. All we require is that this relation is defined by an arithmetical formula $\pro_{\isigma_n}(p,\varphi)$ (with parameters $n,p$ and $\varphi$) which is $\Delta_1$ in the theory $\isigma_1$ and $\isigma_1$-provably equivalent to the usual formalizations of provability. Statement (\ref{eq:no-short-proofs-ph}) is true for any such arithmetization; merely the concrete meaning of ``sufficiently large" may change (cf.\ Remark \ref{rmk:preprocess-proofs} below). Another interesting comparison is with a result of Kraj\'i\v{c}ek \cite[Theorem 6.1]{krajicek89}: He considers $\Pi_2$-instances of the Paris-Harrington Principle and establishes linear bounds on the number of steps in proofs in full Peano Arithmetic (rather than in restricted fragments).\\

To conclude this introduction, let us summarize the different sections of the paper: In Section \ref{sect:bounding-proof-sizes} we show how the analysis of reflection leads to lower bounds on proof sizes. Given a theory $\mathbf T$, the uniform reflection principle for the formula $\exists_y\varphi(x,y)$ expresses that ``for all $p$ and $n$ there is an $N$ such that if $p$ is a $\mathbf T$-proof of $\exists_y\varphi(\overline n,y)$ then $\varphi(\overline n,\overline N)$ is true''. If we have a bound on the provably total functions of reflection then we know that the witness $N$ cannot be too much bigger than the code of the proof $p$. Vice versa $p$ cannot be too small if $\exists_y\varphi(\overline n,y)$ has only large witnesses. We suppose that this line of argumentation is known (it occurs e.g.\ in \cite{pudlak93}), but the author knows of no article that would develop it in general form.\\
The method just described applies to sequences of proofs in a single theory $\mathbf T$, while statement (\ref{eq:no-short-proofs-ph}) is concerned with a sequence of proofs that may contain axioms from increasingly strong theories. This discrepancy is resolved in Section \ref{sect:no-short-proofs-ph}: We consider a notion of ``slow proof'' in Peano Arithmetic, deduced from the slow consistency statement introduced by Sy-David Friedman, Rathjen and Weiermann in \cite{rathjen13}. The idea is to penalize complex induction axioms by a drastic increase in proof size. This generates an interplay between proof length and the use of induction. At the same time it makes the construction of proofs more difficult, thus weakening the reflection and consistency statement. We can then apply the method of Section \ref{sect:bounding-proof-sizes} to show that any \emph{slow} $\pa$-proof of $\exists_N\ph (\overline{e(n+2)},\overline{n+3},\overline{n+2},N)$ must be long. Claim (\ref{eq:no-short-proofs-ph}) will easily follow.\\
The results of Section \ref{sect:no-short-proofs-ph} rely on certain bounds on the provably total functions of slow reflection. The proof of these bounds follows in Section \ref{sect:provably-total-slow-reflection}. There we relate slow uniform $\Sigma_1$-reflection to a ``slow variant" $\feps^\diamond$ of the function $\feps$. We will see that each function $F_\alpha$ with $\alpha<\varepsilon_0$ is dominated by $\feps^\diamond$ while $\feps^\diamond$ itself grows much slower than $\feps$. This computational analysis of slow reflection is complemented by the results of \cite{freund-slow-reflection}, where we investigate the consistency strength ($\Pi_1$-consequences) of slow reflection. Further results on slow provability can be found in \cite{henk-pakhomov}.

\section{Bounding Proof Sizes via Reflection Principles}\label{sect:bounding-proof-sizes}

In this section we show how bounds on the provably total functions of uniform $\Sigma_1$-reflection lead to lower bounds on the sizes of proofs. To formulate the reflection principle we will need a $\Sigma_1$-formula $\true_{\Sigma_1}(\varphi)$ that defines truth for $\Sigma_1$-formulas (in the large sense, i.e.\ the formula may start with several existential quantifiers). The theory $\isigma_1$ should be able to prove Tarski's truth conditions (as guaranteed by \cite[Theorem I.1.75]{hajek91}). With respect to the proof predicate we must develop the theory in some generality:

\begin{definition}\label{def:proof-predicate-general}
 A proof predicate is a $\Pi_1$-formula $\pro(p,\varphi)$ in the language of first-order arithmetic, with only the variables $p$ and $\varphi$ free. Given a proof predicate we have the associated $\Sigma_1$-reflection principle
\begin{equation*}
 \rfn :\equiv \forall_\varphi(\text{``$\varphi$ is a closed $\Sigma_1$-formula''}\land\exists_p\pro(p,\varphi)\rightarrow\true_{\Sigma_1}(\varphi)).
\end{equation*}
For a natural number $p$ and a formula $\varphi$ with Gödel number $\ulcorner\varphi\urcorner$ we say that ``$p$ is a proof of $\varphi$" if the formula $\pro(\overline p,\overline{\ulcorner\varphi\urcorner})$ is true in the standard model.
\end{definition}

The following observation is easy but crucial:

\begin{lemma}\label{lem:proof-size-computes-witness}
 Let $\pro(p,\varphi)$ be a proof predicate, and let $\mathbf T$ be a sound extension of $\isigma_1$ that proves the $\Sigma_1$-reflection principle associated with $\pro(p,\varphi)$. For any $\Sigma_1$-formula $\psi(x,y)$ there is a $\mathbf T$-provably total function $g:\mathbb N^2\rightarrow\mathbb N$ such that $\psi(\overline n,\overline{g(p,n)})$ is true whenever $p$ is a proof of $\exists_y\psi(\overline n,y)$.
\end{lemma}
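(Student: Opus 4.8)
The plan is to extract the function $g$ directly from a $\mathbf T$-proof of $\rfn$ together with the provable totality machinery of $\isigma_1$. First I would fix the $\Sigma_1$-formula $\psi(x,y)$ and, using the fact that $\isigma_1$ proves Tarski's truth conditions for $\true_{\Sigma_1}$, observe that inside $\isigma_1$ the sentence $\exists_y\psi(\overline n,y)$ has a G\"odel number that depends primitive-recursively (indeed elementarily) on $n$; write $\sigma(n)$ for this G\"odel number. One then checks, provably in $\isigma_1$, that $\true_{\Sigma_1}(\overline{\sigma(n)})$ is equivalent to $\exists_y\psi(\overline n,y)$ — this is just an instance of the truth conditions, pushed through the (elementary) syntactic description of how $\sigma(n)$ is built from $n$. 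So from $\mathbf T \vdash \rfn$ we get, reasoning in $\mathbf T$, that for all $p$ and all $n$, if $\pro(p,\overline{\sigma(n)})$ then $\exists_y\psi(\overline n,y)$.

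Next I would turn this $\forall\exists$ statement into a provably total function. Define $h(p,n)$ to be the least $y$ such that $\psi(n,y)$ holds if $p$ is a proof of $\exists_y\psi(\overline n,y)$ (i.e.\ if $\pro(\overline p,\overline{\sigma(n)})$ is true), and $h(p,n)=0$ otherwise. The graph of $h$ is $\Sigma_1$: it says either ``$\pro(p,\sigma(n))$ fails and $y=0$'' — but $\pro$ is $\Pi_1$, so its negation is $\Sigma_1$ — or ``$\psi(n,y)$ and $\forall y'<y\,\lnot\psi(n,y')$''. Strictly speaking the first disjunct makes the definition-by-cases only $\Sigma_1$ in the value, which is exactly the shape needed for a provably total function: $\mathbf T$ proves $\forall p\,\forall n\,\exists y\,(\text{``}y=h(p,n)\text{''})$, because either $\pro(p,\sigma(n))$ is false, in which case $y=0$ works, or it is true, in which case $\rfn$ inside $\mathbf T$ furnishes a witness and we take the least one. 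Thus $h$ is a $\mathbf T$-provably total function, and by construction $\psi(\overline n,\overline{h(p,n)})$ is true whenever $p$ is a proof of $\exists_y\psi(\overline n,y)$. Setting $g:=h$ finishes the argument.

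The only genuinely delicate point is the bookkeeping that makes $\sigma$ available inside $\isigma_1$: one must know that $\isigma_1$ proves, uniformly in $n$, both that $\overline{\sigma(n)}$ is (the code of) a closed $\Sigma_1$-formula and that $\true_{\Sigma_1}(\overline{\sigma(n)}) \leftrightarrow \exists_y\psi(\overline n,y)$. This is standard — it is the content of \cite[Theorem I.1.75]{hajek91} applied to the elementary substitution function $n\mapsto\sigma(n)$ — but it is where all the formalization work hides, so I would state it as the key step and treat the rest (the $\Sigma_1$-ness of the graph of $h$, the appeal to $\rfn$, the minimization) as routine. Note that no use of the $\Pi_1$-complexity of $\pro$ beyond ``its negation is $\Sigma_1$'' is needed, which is why the lemma holds for arbitrary proof predicates in the sense of Definition~\ref{def:proof-predicate-general}.
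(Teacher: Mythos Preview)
Your overall approach matches the paper's --- both pass from $\rfn$ via the truth conditions (the paper phrases this through Feferman's dot notation and the ``It's snowing''-Lemma, which amounts to your $\sigma(n)$ bookkeeping) to
\[
\mathbf T\vdash\forall_{p,x}\bigl(\pro(p,\ulcorner\exists_y\psi(\dot x,y)\urcorner)\rightarrow\exists_y\psi(x,y)\bigr),
\]
and then extract a provably total function. But there is a genuine gap in your minimization step: the formula ``$\psi(n,y)\wedge\forall_{y'<y}\,\lnot\psi(n,y')$'' is \emph{not} $\Sigma_1$. Since $\psi$ is $\Sigma_1$, the negation $\lnot\psi(n,y')$ is $\Pi_1$, and a bounded quantifier over a $\Pi_1$-formula is still $\Pi_1$; hence your second disjunct is $\Sigma_1\wedge\Pi_1$, and the graph of $h$ as you wrote it is only $\Delta_2$. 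The paper flags exactly this point: ``we cannot simply pick the minimal value for $y$ since this would yield a function with a $\Delta_2$-graph.'' (There is also a single-valuedness issue in your disjunction: if $\pro(p,\sigma(n))$ fails but $\psi(n,\cdot)$ happens to have a least witness $y_0>0$, both disjuncts are satisfied, by $y=0$ and by $y=y_0$ respectively.)

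The standard repair, which the paper carries out, is to work instead with the $\Sigma_1$-formula $\pro(p,\ulcorner\exists_y\psi(\dot x,y)\urcorner)\rightarrow\psi(x,y)$ (equivalently $\neg\pro\vee\psi$, a disjunction of two $\Sigma_1$-formulas), rewrite it as $\exists_w\,\theta(p,x,y,w)$ with $\theta$ a $\Delta_0$-matrix, and minimize over the \emph{pair} $\langle y,w\rangle$ rather than over $y$ alone. Then ``$\langle y,w\rangle$ is least with $\theta$'' is $\Delta_0$, an outer existential quantifier over $w$ gives a $\Sigma_1$-graph, and single-valuedness is automatic. With this correction your argument goes through and coincides with the paper's.
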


Note that, since $\mathbf T$ must be sound, the lemma can only be applied to proof predicates which are themselves sound for $\Sigma_1$-formulas.

\begin{proof}
Since the theory $\mathbf T$ extends $\isigma_1$ it is strong enough to handle Feferman's dot notation, and it proves the ``It's snowing"-Lemma (see \cite[Corollary I.1.76]{hajek91}). Combining this with the reflection principle for $\pro(p,x)$ we obtain
\begin{equation*}
 \mathbf T\vdash\forall_x(\exists_p\pro(p,\ulcorner\exists_y\psi(\dot x,y)\urcorner)\rightarrow\exists_y\psi(x,y)).
\end{equation*}
Prefixing quantifiers transforms this into
\begin{equation*}
 \mathbf T\vdash\forall_{p,x}\exists_y(\pro(p,\ulcorner\exists_y\psi(\dot x,y)\urcorner)\rightarrow\psi(x,y)).
\end{equation*}
We remark that it is only mildly non-constructive to prefix the existential quantifier in the consequent: A computation of the witness $y$ will use the proof $p$ but rather not the computational content of the statement $\pro(p,\ulcorner\exists_y\psi(\dot x,y)\urcorner)$. In any case the formula $\pro(p,\ulcorner\exists_y\psi(\dot x,y)\urcorner)\rightarrow\psi(x,y)$ is $\Sigma_1$ in $\isigma_1$. As we have seen the theory $\mathbf T$ shows that this formula defines a left-total relation. To obtain a single-valued function we apply a standard minimization argument. Note that we cannot simply pick the minimal value for $y$ since this would yield a function with a $\Delta_2$-graph; instead we simultaneously minimize over $y$ and the witness to the existential quantifier implicit in $\pro(p,\ulcorner\exists_y\psi(\dot x,y)\urcorner)\rightarrow\psi(x,y)$. This results in a $\Sigma_1$-formula $\chi(p,x,y)$ such that we have
\begin{equation*}
 \mathbf T\vdash\forall_{p,x,y}(\chi(p,x,y)\rightarrow(\pro(p,\ulcorner\exists_y\psi(\dot x,y)\urcorner)\rightarrow\psi(x,y)))
\end{equation*}
and $\mathbf T\vdash\forall_{x,p}\exists!_y\chi(p,x,y)$. Since $\mathbf T$ is sound the formula $\chi(p,x,y)$ does indeed define a $\mathbf T$-provably total function $g:\mathbb N^2\rightarrow\mathbb N$, which satisfies $\mathbb N\vDash\chi(\overline p,\overline n,\overline{g(p,n)})$ for all natural numbers $p$ and $n$. By the above we also have
\begin{equation*}
 \mathbb N\vDash\pro(\overline p,\overline{\ulcorner\exists_y\psi(\overline n,y)\urcorner})\rightarrow\psi(\overline n,\overline{g(p,n)})\qquad\text{for all $p,n\in\mathbb N$}.
\end{equation*}
Lifting the implication to the meta-language gives the desired claim.
\end{proof}

We can deduce the promised lower bound on proof sizes:

\begin{proposition}\label{prop:lower-bounds-proof-general}
Let $\pro(p,\varphi)$ be a proof predicate, and let $\mathbf T$ be a sound extension of $\isigma_1$ that proves the $\Sigma_1$-reflection principle for $\pro(p,\varphi)$.  Consider a $\Sigma_1$-formula $\psi(x,y)$ and define a function $F_\psi:\mathbb N\rightarrow\mathbb N\cup\{\infty\}$ by setting
\begin{equation*}
F_\psi(n):=\begin{cases}
m\quad &\text{if $m$ is the least number for which $\psi(\overline n,\overline m)$ is true},\\
\infty\quad &\text{if $\exists_y\psi(\overline n,y)$ is false}.
\end{cases}
\end{equation*}
Let $f:\mathbb N\rightarrow\mathbb N$ be a function with $f(n)\geq n$ and such that, whenever $g$ is $\mathbf T$-provably total, the function $g\circ f$ is eventually dominated by $F_\psi$ (considering $\infty$ as bigger than any natural number). Then there is a bound $N$ such that we have
\begin{equation*}
p>f(n)\qquad\text{whenever $p$ is a proof of $\exists_y\psi(\overline n,y)$ with $n\geq N$}.
\end{equation*}
\end{proposition}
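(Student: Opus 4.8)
The plan is to feed the reflection principle into Lemma~\ref{lem:proof-size-computes-witness} and then argue by minimality of $F_\psi$. Lemma~\ref{lem:proof-size-computes-witness} already supplies a $\mathbf T$-provably total function $g:\mathbb N^2\to\mathbb N$ such that $\psi(\overline n,\overline{g(p,n)})$ is true whenever $p$ is a proof of $\exists_y\psi(\overline n,y)$. The idea is that a small such proof $p$ would produce a \emph{small} witness $g(p,n)$ for $\exists_y\psi(\overline n,y)$, hence an upper bound on the \emph{least} witness $F_\psi(n)$ in terms of $f(n)$; but the growth hypothesis on $f$ forbids exactly this for large $n$, so no small proof can exist.

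Concretely I would proceed as follows. First obtain $g$ from Lemma~\ref{lem:proof-size-computes-witness}. Since the hypothesis on $f$ speaks about unary provably total functions, pass to the unary function $\hat g(k):=1+\max\{g(q,j):q\leq k,\ j\leq k\}$; it satisfies $g(q,j)<\hat g(k)$ for all $q,j\leq k$, and it is again $\mathbf T$-provably total, since $\mathbf T\supseteq\isigma_1$ proves that bounded maxima of provably total functions are total. By the hypothesis, $\hat g\circ f$ is eventually dominated by $F_\psi$, so fix $N$ with $(\hat g\circ f)(n)<F_\psi(n)$ for all $n\geq N$ (recall that $\infty$ counts as larger than every natural number). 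Now let $n\geq N$ and let $p$ be a proof of $\exists_y\psi(\overline n,y)$. By Lemma~\ref{lem:proof-size-computes-witness} the sentence $\exists_y\psi(\overline n,y)$ is true, so $F_\psi(n)$ is a natural number and $F_\psi(n)\leq g(p,n)$. Assuming $p\leq f(n)$ and recalling $n\leq f(n)$, both arguments of $g$ are $\leq f(n)$, whence $g(p,n)<\hat g(f(n))=(\hat g\circ f)(n)<F_\psi(n)$, contradicting $F_\psi(n)\leq g(p,n)$. Therefore $p>f(n)$, as desired.

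I do not expect a genuine obstacle: essentially all the content is in Lemma~\ref{lem:proof-size-computes-witness}, which converts reflection into a provably total witnessing function. The remaining points are bookkeeping --- turning the binary $g$ into a unary function without losing provable totality (handled by the bounded maximum, with $f(n)\geq n$ ensuring the second argument is also controlled), the harmless $+1$ so the argument is indifferent to whether ``dominated'' means eventually $\leq$ or eventually $<$, and the observation that a proof of $\exists_y\psi(\overline n,y)$ forces $F_\psi(n)$ to be finite so that the closing chain of inequalities compares genuine natural numbers. If $F_\psi(n)=\infty$ for some $n\geq N$ there is nothing to prove anyway, since soundness of the proof predicate (inherited from soundness of $\mathbf T$ via the reflection principle) rules out any proof of the then-false sentence $\exists_y\psi(\overline n,y)$.
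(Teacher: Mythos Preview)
Your proposal is correct and essentially identical to the paper's argument. Your $\hat g(k)=1+\max\{g(q,j):q,j\leq k\}$ is exactly the paper's $g^\triangle(k)=g_1(k,k)+1$, where the paper builds the monotone $g_1$ by two layers of primitive recursion; unwinding those recursions gives $g_1(k,k)=\max\{g(q,j):q,j\leq k\}$, so the two constructions coincide, and the closing chain of inequalities is the same (the paper uses $\leq$ rather than $<$ for ``eventually dominated'', but your $+1$ absorbs this, as you note).
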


To avoid misunderstanding, we stress that the notion of proof in the last line of the proposition is induced by the proof predicate in the first line, via Definition \ref{def:proof-predicate-general}.

\begin{proof}
Let $g:\mathbb N^2\rightarrow\mathbb N$ be the function provided by Lemma \ref{lem:proof-size-computes-witness}. We can make $g$ monotone in both arguments: First define $g_0:\mathbb N^2\rightarrow\mathbb N$ by the primitive recursion
\begin{align*}
g_0(p,0)&:=g(p,0),\\
g_0(p,n+1)&:=\max\{g(p,n+1),g_0(p,n)\}.
\end{align*}
This yields $g_0(p,n)\geq g(p,n)$ for all numbers $p$ and $n$, as well as $g_0(p,n)\leq g_0(p,n')$ whenever we have $n\leq n'$. Now define $g_1:\mathbb N^2\rightarrow\mathbb N$ by setting
\begin{align*}
g_1(0,n)&:=g_0(0,n),\\
g_1(p+1,n)&:=\max\{g_0(p+1,n),g_1(p,n)\}.
\end{align*}
It is obvious that we have $g_1(p,n)\geq g_0(p,n)\geq g(p,n)$ for all numbers $p$ and $n$, and that $p\leq p'$ implies $g_1(p,n)\leq g_1(p',n)$. By induction on $p$ one can also show that $g_1(p,n)\leq g_1(p,n')$ holds whenever we have $n\leq n'$. Lemma \ref{lem:proof-size-computes-witness} implies that we have
\begin{equation*}
F_\psi(n)\leq g_1(p,n)\qquad\text{whenever $p$ is a proof of $\exists_y\psi(\overline n,y)$}.
\end{equation*}
Since the theory $\mathbf T$ extends $\isigma_1$ its provably total functions are closed under primitive recursion, by \cite[Theorem I.1.54]{hajek91}. Thus $g_1$ is still $\mathbf T$-provably total. We define another $\mathbf T$-provably total function $g^\triangle:\mathbb N\rightarrow\mathbb N$, diagonalizing over $g_1$, as
\begin{equation*}
g^\triangle(p):=g_1(p,p)+1.
\end{equation*}
By assumption there is a bound $N$ such that we have
\begin{equation*}
(g^\triangle\circ f)(n)\leq F_\psi(n)\qquad\text{for all $n\geq N$}.
\end{equation*}
Let us show that the same bound $N$ satisfies the claim of the proposition: Consider an arbitrary $n\geq N$ and assume that $p$ is a proof of the formula $\exists_y\psi(\overline n,y)$. Aiming at a contradiction we assume $p\leq f(n)$. Then we have
\begin{equation*}
F_\psi(n)\leq g_1(p,n)\leq g_1(f(n),f(n))<(g^\triangle\circ f)(n)\leq F_\psi(n),
\end{equation*}
which is indeed absurd.
\end{proof}

It is a nice property of the proposition that the bounds it establishes are invariant under basic transformations of proofs:

\begin{remark}\label{rmk:preprocess-proofs}
If $f$ satisfies the conditions of the proposition and $h$ is $\mathbf T$-provably total (e.g.\ primitive recursive) with $h(p)\geq p$ then $h\circ f$ satisfies these conditions as well. Thus proofs of $\exists_y\psi(\overline n,y)$ will even be bigger than $h(f(n))$ for all $n$ above some (possibly increased) bound.\\
This is useful because it allows us to preprocess proofs: Consider a modified notion proof$'$ and a sequence of formulas $\varphi_n$, not necessarily of the form $\varphi(\overline n)$ and not necessarily in the syntactic class $\Sigma_1$. Assume that there is a $\Sigma_1$-formula $\psi(x,y)$ and a primitive recursive function $h$ which transforms any proof$'$ of $\varphi_n$ into (an upper bound for) a proof of $\exists_y\psi(\overline n,y)$. Possibly increasing $h$ we can assume that $h$ is monotone and satisfies $h(p)\geq p$. Using the proposition we may be able to show that $p>h(f(n))$ holds whenever $p$ is a proof of $\exists_y\psi(\overline n,y)$, with $n$ sufficiently large. We want to deduce $q>f(n)$ where $q$ is a proof$'$ of $\varphi_n$. Indeed, $q\leq f(n)$ would imply $h(q)\leq h(f(n))$. This would mean that there exists a proof of $\exists_y\psi(\overline n,y)$ below $h(f(n))$, which we have seen to be false. The proof of Lemma \ref{leq:lower-bound-slow-proofs-ph} contains a detailed application of this argument.
\end{remark}

To conclude this section we illustrate what a simple application of the proposition can yield. Adopting the notation from \cite{solovay81} we have
\begin{equation*}
 \sigma(n,k)=\min\{N\,|\,\ph (\overline k,\overline{n+1},\overline n,\overline N)\text{ is true}\},
\end{equation*}
i.e.\ the number $\sigma(n,k)$ is the smallest witness for the Paris-Harrington Principle with arity $n$ and $k$ colours. We know from \cite[Theorem 3.2]{paris-harrington} that the function $n\mapsto\sigma(n,n)$ eventually dominates any provably total function of Peano Arithmetic. The following result on proof sizes is considerably weaker than (\ref{eq:no-short-proofs-ph}), insofar as it speaks about fixed fragments of Peano Arithmetic.

\begin{corollary}\label{cor:minimal-proofs-fragments-immediate}
For any number $k$ the (total) function
\begin{equation*}
n\mapsto\parbox[t]{10cm}{\raggedright ``the smallest Gödel number of a proof of the\\ \raggedleft $\Sigma_1$-formula $\exists_N\ph (\overline n,\overline n+1,\overline n,N)$ by $\Sigma_k$-induction"}
\end{equation*}
eventually dominates any provably total function of Peano Arithmetic.
\end{corollary}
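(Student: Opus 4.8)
The plan is to apply Proposition~\ref{prop:lower-bounds-proof-general} with a suitable instantiation of its parameters. Fix the colour count $k$ and take the theory $\mathbf T$ to be $\isigma_{k+1}$, or more precisely whatever fragment of $\pa$ proves the $\Sigma_1$-reflection principle for the proof predicate $\pro_{\isigma_k}$ describing provability by $\Sigma_k$-induction; since $\isigma_{k+1}$ proves uniform $\Sigma_1$-reflection for $\isigma_k$ (a standard fact, following from the fact that $\isigma_{k+1}$ proves the consistency of $\isigma_k$ together with $\Sigma_{k+1}$-reflection for $\isigma_k$, or more directly from provable $\Sigma_1$-completeness), this is a sound extension of $\isigma_1$ as required. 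For the $\Sigma_1$-formula $\psi(x,y)$ I would take a natural arithmetization of ``$y$ witnesses $\ph(\overline{x},\overline{x+1},\overline{x},y)$'', so that $F_\psi(n)$ is exactly the Ketonen--Solovay number $\sigma(n,n)$. It remains to exhibit an $f$ with $f(n)\geq n$ such that $g\circ f$ is eventually dominated by $\sigma(\cdot,\cdot)$-on-the-diagonal for every $\isigma_{k+1}$-provably total $g$.

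The key point is the growth rate of $n\mapsto\sigma(n,n)$. By \cite[Theorem 3.2]{paris-harrington} this function eventually dominates every provably total function of $\pa$, hence in particular every provably total function of $\isigma_{k+1}$, and it does so with room to spare: for any $\isigma_{k+1}$-provably total $g$ the composition $g\circ g$ (or any fixed finite iterate) is again $\isigma_{k+1}$-provably total, so $\sigma(n,n)$ eventually dominates $g(g(n))$ as well. I would therefore simply take $f$ to be the identity, $f(n)=n$. Then for any $\isigma_{k+1}$-provably total $g$, writing $g'(n):=g(n)$, we have that $g'\circ f=g'$ is $\isigma_{k+1}$-provably total and hence eventually dominated by $F_\psi(n)=\sigma(n,n)$; the hypothesis $f(n)\geq n$ is trivially met. (If one prefers to keep a margin, one may instead let $f$ be a fixed fast primitive recursive function such as $n\mapsto 2^n$; since $\isigma_1\subseteq\isigma_{k+1}$-provably total functions are closed under composition with primitive recursive functions, the domination hypothesis still holds.)

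Proposition~\ref{prop:lower-bounds-proof-general} then yields a bound $N$ such that every proof by $\Sigma_k$-induction of $\exists_N\ph(\overline n,\overline{n+1},\overline n,N)$ has G\"odel number $>f(n)\geq n$ for all $n\geq N$ --- but this only gives that the minimal-proof function is unbounded, not that it dominates $\pa$-provably total functions. To get the stronger conclusion I would instead run the argument ``in reverse'': let $p(n)$ denote the smallest G\"odel number of a $\Sigma_k$-induction proof of $\exists_N\ph(\overline n,\overline{n+1},\overline n,N)$, and let $g$ be the $\isigma_{k+1}$-provably total function of Lemma~\ref{lem:proof-size-computes-witness} (made monotone as in the proof of the Proposition, and diagonalized to $g^\triangle$). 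Then $\sigma(n,n)=F_\psi(n)\leq g^\triangle(p(n))$ for all $n$ past the point where a proof exists, so if some $\pa$-provably total $h$ eventually dominated $p$, then $g^\triangle\circ h$ --- still $\pa$-provably total, since $\pa$-provably total functions are closed under composition --- would eventually dominate $\sigma(n,n)$, contradicting \cite[Theorem 3.2]{paris-harrington}. Hence $p$ itself eventually dominates every $\pa$-provably total function.

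The main obstacle is not any single deep step but a bookkeeping subtlety: one must be sure that the ``minimal proof'' function is total, i.e.\ that for every $n$ the $\Sigma_1$-sentence $\exists_N\ph(\overline n,\overline{n+1},\overline n,N)$ really does have \emph{some} proof by $\Sigma_k$-induction. This is clear because the sentence is true and $\Sigma_1$, hence provable already in Robinson arithmetic and a fortiori by $\Sigma_k$-induction; but it is worth stating explicitly so that the function in the Corollary is genuinely total rather than partial. One must also double-check that the arithmetization $\psi$ of $\ph$ is $\Delta_1$ in $\isigma_1$ with $F_\psi$ literally equal to $\sigma(\cdot,\cdot)$ up to the trivial reindexing, which is guaranteed by the formalization of \cite[Section I.1(b)]{hajek91} recalled in the introduction. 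Everything else is a direct citation of Lemma~\ref{lem:proof-size-computes-witness}, Proposition~\ref{prop:lower-bounds-proof-general}, closure of provably total functions under composition, and the Paris--Harrington unprovability result.
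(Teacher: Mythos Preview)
Your final argument is essentially correct, but there is a logical slip: from ``no $\pa$-provably total $h$ eventually dominates $p$'' you cannot conclude ``$p$ eventually dominates every $\pa$-provably total $h$'', since two functions can alternate without either eventually dominating the other. The right contrapositive is: if $p$ did \emph{not} eventually dominate some $\pa$-provably total $h$ (taken monotone and $\geq\mathrm{id}$ without loss of generality), then $p(n)\leq h(n)$ for infinitely many $n$; by monotonicity of $g_1$ this gives $\sigma(n,n)\leq g_1(p(n),n)\leq g_1(h(n),h(n))<g^\triangle(h(n))$ for those $n$, contradicting the fact that $\sigma(\cdot,\cdot)$ eventually dominates the $\pa$-provably total function $g^\triangle\circ h$. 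With this fix your argument goes through.

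The paper takes a shorter route that avoids your detour through $\mathbf T=\isigma_{k+1}$. It applies Proposition~\ref{prop:lower-bounds-proof-general} with $\mathbf T=\pa$ (which of course proves $\Sigma_1$-reflection for $\isigma_k$) and, crucially, lets $f$ be the \emph{arbitrary} $\pa$-provably total function one wishes to show is dominated. The hypothesis of the proposition---that $g\circ f$ is eventually dominated by $F_\psi=\sigma(\cdot,\cdot)$ for every $\mathbf T$-provably total $g$---is then immediate, since $g\circ f$ is again $\pa$-provably total; the conclusion $p>f(n)$ for large $n$ is exactly the desired domination. Your weaker choice $\mathbf T=\isigma_{k+1}$ prevents you from feeding an arbitrary $\pa$-provably total $f$ into the proposition, which is why you end up unpacking Lemma~\ref{lem:proof-size-computes-witness} by hand. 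The paper's choice of $\mathbf T$ makes Proposition~\ref{prop:lower-bounds-proof-general} do all the work in one application.
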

\begin{proof}
Let $f$ be an arbitrary $\pa$-provably total function. Assume that $f(n)\geq n$ holds for all $n$, possibly after replacing $f$ by the function $n\mapsto\max\{f(n),n\}$. We apply Proposition \ref{prop:lower-bounds-proof-general} to the usual proof predicate $\pro_{\isigma_k}(p,\varphi)$ for the theory of $\Sigma_k$-induction (or rather to a $\Pi_1$-formula that is equivalent to $\pro_{\isigma_k}(p,\varphi)$ over $\isigma_1$), to the theory $\mathbf T=\pa$, to the formula $\psi(x,y)\equiv \ph (x,x+1,x,y)$, and to the function $f$. Then $n\mapsto\sigma(n,n)$ is the function $F_\psi$ of Proposition \ref{prop:lower-bounds-proof-general}. The assumptions of the proposition are satisfied: It is well known that Peano Arithmetic proves uniform $\Sigma_1$-reflection for the theory $\isigma_k$ (see e.g.\ \cite[Corollary I.4.34]{hajek91}). For any $\pa$-provably total function $g$ the composition $g\circ f$ is $\pa$-provably total as well, and thus indeed dominated by $n\mapsto\sigma(n,n)$. The result of Proposition \ref{prop:lower-bounds-proof-general} is nothing but the claim of the corollary.
\end{proof}

The bound of the corollary is reasonably accurate, in the sense that the function computing the minimal proofs is not much faster than the provably total functions of Peano Arithmetic: Recall that $\ph (k,m,n,N)$ is $\Delta_1$ in $\isigma_1$. Thus not only $\sigma(n,n)$ itself but the witnesses to all unbounded quantifiers of the $\Sigma_1$-formula $\exists_N\ph (\overline n,\overline n+1,\overline n,N)$ are bounded by a primitive recursive function in $n$ and $\sigma(n,n)$. Furthermore, the $\Sigma_1$-completeness theorem is established by a primitive recursive construction of proofs. Thus there is a primitive recursive function $h:\mathbb N^2\rightarrow\mathbb N$ such that $h(n,\sigma(n,n))$ is the Gödel number of a proof of $\exists_N\ph (\overline n,\overline n+1,\overline n,N)$ in the theory $\isigma_0$.

\section{No Short Proofs for Instances of the Paris-Harrington Principle}\label{sect:no-short-proofs-ph}

In this section we refine Corollary \ref{cor:minimal-proofs-fragments-immediate} by varying $k$ alongside with $n$. On first sight it may seem astonishing that Proposition \ref{prop:lower-bounds-proof-general}, which only deals with one proof predicate at a time, can be used to this effect. We will see, however, that a single proof predicate can inform us about proofs in various theories: The slow $\pa$-proofs that we will introduce penalize the use of complex induction axioms by a drastic increase in proof length, thus creating an interplay between proof length and the amount of induction used in the proof.\\
Before we can define the notion of a slow proof we need some preliminaries on ordinal notations and the fast-growing hierarchy of functions. Ordinal notations are required for the ordinals below $\varepsilon_0$, the smallest fixed point of the function $\alpha\mapsto\omega^\alpha$. As usual they will be based on the Cantor normal form
\begin{equation*}
\alpha=\omega^{\alpha_1}\cdot n_1+\dots +\omega^{\alpha_k}\cdot n_k\qquad\text{with $k\in\mathbb N$, $n_i\in\mathbb N\backslash\{0\}$ and $\alpha_1>\dots >\alpha_k$.}
\end{equation*}
Crucially, $\alpha<\varepsilon_0$ implies $\alpha_1<\alpha$ so that the Cantor normal form inductively yields finite term notations. Basic ordinal arithmetic can be translated into syntactic operations on these terms. The operations are sufficiently elementary to make ordinal arithmetic available in the theory $\isigma_1$, after arithmetization of the finite term syntax. In fact, Sommer in \cite[Sections 2 and 3]{sommer95} shows that theories much weaker than $\isigma_1$ suffice if one encodes the terms efficiently. In this paper we are not interested in very weak theories, but it is nevertheless convenient to adopt the encoding of Sommer: This allows us to use his $\Delta_0$-definition of the functions in the fast-growing hierarchy.\\
We remark that the ordinal arithmetic of \cite{sommer95} includes fundamental sequences: The fundamental sequence $(\{\alpha\}(n))_{n\in\mathbb N}$ of a limit ordinal $\alpha$ is a strictly increasing sequence of ordinals with supremum $\alpha$. Precisely, any limit ordinal $\alpha$ can uniquely be written as $\alpha=\beta+\omega^\gamma\cdot(k+1)$ where $\gamma>0$ is the smallest exponent of the Cantor normal form of $\alpha$, and $\beta$ contains the larger summands. We then have
\begin{alignat*}{3}
 \{\beta+\omega^\gamma\cdot(k+1)\}(n) & = \beta+\omega^\gamma\cdot k+\omega^\delta\cdot(n+1)\quad && \text{if $\gamma=\delta+1$},\\
 \{\beta+\omega^\gamma\cdot(k+1)\}(n) & = \beta+\omega^\gamma\cdot k+\omega^{\{\gamma\}(n)}\quad && \text{if $\gamma$ is a limit}.
\end{alignat*}
For zero and successor ordinals one sets $\{0\}(n):=0$ and $\{\beta+1\}(0):=\beta$.\\
Next, consider the ``stack of $\omega$'s"-function defined by the recursion
\begin{equation*}
\omega_0^\alpha=\alpha,\qquad \omega_{n+1}^\alpha=\omega^{\omega_n^\alpha}.
\end{equation*}
As usual, $\omega_n$ abbreviates $\omega_n^1$. This function is not part of the ordinal arithmetic encoded by Sommer (although it is, of course, part of his meta-theory). Since Sommer does encode the function $\alpha\mapsto\omega^\alpha$ it is immediate to make the function $(n,\alpha)\mapsto\omega_n^\alpha$ (operating on the codes) available in $\isigma_1$. However, we will need more, namely a $\Delta_0$-formula defining the graph and explicit bounds on the values of this function. Write $\ulcorner\alpha\urcorner$ for the term notation of $\alpha$, represented as a list with digits from $\{1,\dots ,4\}$ as in \cite{sommer95}. Then $\omega_n^\alpha$ is represented by the following concatenation of lists:
\begin{equation*}
\ulcorner\omega_n^\alpha\urcorner=\langle\underbrace{4,\dots ,4}_{\mathclap{\text{$n$ characters $4$}}}\rangle^\frown{\ulcorner\alpha\urcorner}\vphantom{\rangle}^\frown\langle\underbrace{3,1,\dots ,3,1}_{\mathclap{\text{$n$ alternations}}}\rangle
\end{equation*}
Indeed, with each character $4$ we move to the exponent of the leftmost summand of the Cantor normal form, while $3$ instructs us to leave the exponent and look at the corresponding coefficient, which in the present case is always $1$ (represented by the base two notation of $1$, which happens to be the list $\langle 1\rangle$ itself). Now to verify the relation $\omega_n^\alpha=\beta$ we only have to compare digits in the sequence representations of $\alpha$ and $\beta$, and this can be cast into a $\Delta_0$-formula (see \cite[Section 2.2]{sommer95}). Using \cite[Proposition 2.1]{sommer95}, which relates the code of a list of digits to its length, we can also establish the following inequality between the codes of $\alpha$ and $\omega_n^\alpha$:
\begin{equation}\label{eq:bound-codes-stack-of-omega}
\isigma_1\vdash\forall_{n,\alpha}\,\omega_n^\alpha\leq 4^{3n+1}\cdot(\alpha+1).
\end{equation}
Let us remark that we do not extend the ordinal notation system by a symbol for $\varepsilon_0$, in order to keep it closed under the usual operations of ordinal arithmetic. By a harmless abuse of notation we will sometimes refer to the ``fundamental sequence" of $\varepsilon_0$, which we define as $\{\varepsilon_0\}(n):=\omega_{n+1}$.\\
Using fundamental sequences we can define the fast-growing hierarchy of functions indexed by ordinals below and including $\varepsilon_0$. The definition varies slightly within the literature; our version differs from the classic \cite{wainer70,schwichtenberg71} and coincides e.g.\ with \cite{sommer95}:
\begin{gather*}
F_0(x) := x+1,\\
F_{\alpha+1}(x) := F_\alpha^{x+1}(x),\\
F_\lambda(x) := F_{\{\lambda\}(x)}(x)\quad\text{for $\lambda$ a limit ordinal}.
\end{gather*}
Here and in the following an exponent to a function symbol denotes the number of times the function is to be iterated. Given an arithmetization of ordinal arithmetic it is easy to define the graph of $(\alpha,x,i)\mapsto F_\alpha^i(x)$ by a $\Sigma_1$-formula in the language of first-order arithmetic: As described in \cite[Section 4.1]{sommer95-thesis} one can compute $F_\alpha^i(x)$ by simplifying expressions of the form $F_{\alpha_1}^{i_1}(F_{\alpha_2}^{i_2}(\cdots(F_{\alpha_k}^{i_k}(z))\cdots))$, so one only needs to state the existence of such a computation sequence. What is remarkable is that the size of an (improved) computation sequence can be bounded by a polynomial in the value of $F_\alpha^i(x)$. This is worked out in \cite[Appendix A]{sommer95-thesis} (see also the less detailed \cite[Section 5.2]{sommer95}) and leads to a $\Delta_0$-formula $F_\alpha^i(x)=y$ with free variables $x,y,\alpha,i$ which defines the functions $F_\alpha$ for $\alpha <\varepsilon_0$, as well as their iterations. By \cite[Theorem 5.3]{sommer95} the defining equations of the fast-growing hierarchy are provable in $\isigma_1$ (under the assumption that the involved computations terminate, which is of course unprovable in $\isigma_1$). As Sommer only encodes the hierarchy below $\varepsilon_0$ we should show separately that the formula
\begin{equation*}
 \feps(x)=y\quad:\equiv\quad\exists_\alpha(\alpha=\omega_{x+1}\land F_\alpha(x)=y)
\end{equation*}
is $\Delta_0$ in $\isigma_1$: The only task is to bound the existentially quantified $\alpha$. By \cite[Lemma 2.3, Proposition 2.12]{rathjen13} the inequalities
\begin{equation*}
 F_{\omega_{x+1}}(x)\geq F_\omega(x)\geq F_2(x)=2^{x+1}\cdot(x+1)-1\geq 2^{x+1}\qquad\text{for $x\geq 1$}
\end{equation*}
are provable in $\isigma_1$. Combining this with (\ref{eq:bound-codes-stack-of-omega}) we obtain
\begin{equation}\label{eq:feps-delta-zero}
 \isigma_1\vdash x\geq 1\,\rightarrow\,(\feps(x)=y\leftrightarrow\exists_{\alpha\leq y^6\cdot 4\cdot(\ulcorner 1\urcorner+1)}(\alpha=\omega_{x+1}\land F_\alpha(x)=y)),
\end{equation}
where $\ulcorner 1\urcorner$ denotes the code of the ordinal $1$.\\
Writing $\langle\cdot,\cdot\rangle$ for the Cantor pairing function with projections $\pi_1(\cdot),\pi_1(\cdot)$ we can now define slow proofs in Peano Arithmetic. The idea is to penalize the use of complex induction axioms by a drastic increase in proof length, and thus to create an interplay between proof size and the amount of induction used in the proof.

\begin{definition}[cf.\ {\cite{rathjen13}}]
 A pair $\langle q,N\rangle$ is a slow $\pa$-proof of a formula $\varphi$ if there is a number $n$ such that we have $N=\feps(n)$ and such that $q$ codes a (usual) proof of $\varphi$ in the theory $\isigma_{n+1}$. This notion is defined by the formula
\begin{equation*}
 \pro_\pa^\diamond(p,\varphi):\equiv\exists_x(\pro_{\isigma_{x+1}}(\pi_1(p),\varphi)\land\feps(x)=\pi_2(p)),
\end{equation*}
which is $\Delta_1$ in $\isigma_1$ since by \cite[Proposition 5.4]{sommer95} the second conjunct implies the bound $x\leq\pi_2(p)$.
\end{definition}

For a formula $F(x)=y$ let us abbreviate $\exists_y F(x)=y$ by $F(x)\!\downarrow$. Also, we write $\pr_{\isigma_x}(\varphi)$ for the formula $\exists_p\pro_{\isigma_x}(p,\varphi)$. It is easy to see that the slow provability predicate
\begin{equation*}
 \pr_\pa^\diamond(\varphi):\equiv\exists_p\pro_\pa^\diamond(p,\varphi)
\end{equation*}
satisfies the equivalence
\begin{equation*}
 \isigma_1\vdash\pr_\pa^\diamond(\varphi)\leftrightarrow\exists_x(\pr_{\isigma_{x+1}}(\varphi)\land\feps(x)\!\downarrow).
\end{equation*}
The slow uniform $\Sigma_1$-reflection principle
\begin{equation*}
 \drfn:\equiv\forall_\varphi(\text{``$\varphi$ is a closed $\Sigma_1$-formula''}\land\pr_\pa^\diamond(\varphi)\rightarrow\true_{\Sigma_1}(\varphi))
\end{equation*}
and the slow consistency statement
\begin{equation*}
 \con^\diamond(\pa):\equiv\neg\pr_\pa^\diamond(\overline{\ulcorner 0=1\urcorner})
\end{equation*}
can be characterized as
\begin{equation}\label{eq:slow-rfn-rfn-fragments}
 \isigma_1\vdash\drfn\leftrightarrow\forall_x(\feps(x)\!\downarrow\,\rightarrow\rfn(\isigma_{x+1}))
\end{equation}
and
\begin{equation*}
 \isigma_1\vdash\con^\diamond(\pa)\leftrightarrow\forall_x(\feps(x)\!\downarrow\,\rightarrow\con(\isigma_{x+1})).
\end{equation*}
As the last equivalence reveals the notion of slow $\pa$-proof comes from the article \cite{rathjen13} by S.-D.\ Friedman, Rathjen and Weiermann: These authors introduce the slow consistency statement
\begin{equation*}
 \con^*(\pa)\equiv\forall_x(\feps(x)\!\downarrow\,\rightarrow\con(\isigma_x))
\end{equation*}
and show that we have
\begin{equation}\label{eq:slow-con-weaker-con}
 \pa+\con^*(\pa)\nvdash\con(\pa).
\end{equation}
It has been pointed out by Michael Rathjen \cite{rathjen-miscellanea-slow-consistency} that slow provability satisfies the G\"odel-L\"ob conditions, provably so in $\isigma_1$. In many respects it thus behaves as the usual provability predicate for Peano Arithmetic. The index shift between our $\con^\diamond(\pa)$ and the formula $\con^*(\pa)$ of \cite{rathjen13} has been introduced to improve the bounds on proof sizes that we are about to establish.\\
The central ingredient to our bounds on proof sizes is a computational analysis of slow reflection. Since this analysis is independent and somewhat technical we defer it to Section \ref{sect:provably-total-slow-reflection} below. In the present section we will only use the following result of this analysis:

\newtheorem*{thm:total-functions-slow-reflection-mod}{Theorem \ref{thm:total-functions-slow-reflection-mod}}
\begin{thm:total-functions-slow-reflection-mod}
 For any provably total function $g$ of $\pa+\drfn$ there is a number $N$ such that we have
\begin{equation*}
 g(\feps(n\dotminus 1))\leq \feps(n)\qquad\text{for all $n\geq N$}.
\end{equation*}
In particular any provably total function of the theory $\pa+\drfn$ is eventually dominated by $\feps$.
\end{thm:total-functions-slow-reflection-mod}

The reader who prefers to see all proofs in order may go through Section \ref{sect:provably-total-slow-reflection} now and return to this point afterwards. In the rest of this section we show how results about proof sizes in fragments of Peano Arithmetic can be deduced. It is worth observing that a weaker version of Theorem \ref{thm:total-functions-slow-reflection-mod} suffices for these applications: Namely, it would be enough to bound the provably total functions of $\isigma_1+\drfn$ rather than those of $\pa+\drfn$. However, as a result in its own right Theorem \ref{thm:total-functions-slow-reflection-mod} is certainly more satisfying with the stronger base theory. Let us now investigate the size of proofs of the formulas $\feps(\overline n)\!\downarrow$. Afterwards we will come to the slightly more subtle case of the Paris-Harrington Principle:

\begin{lemma}\label{leq:lower-bound-slow-proofs}
 There is a number $N$ such that we have
\begin{equation*}
p>\langle\feps(n\dotminus 1),\feps(n\dotminus 1)\rangle\quad\text{for any slow $\pa$-proof $p$ of $\feps(\overline n)\!\downarrow$ with $n\geq N$}.
\end{equation*}
\end{lemma}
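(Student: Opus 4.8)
The plan is to apply Proposition~\ref{prop:lower-bounds-proof-general} with the proof predicate $\pro_\pa^\diamond(p,\varphi)$, the base theory $\mathbf T=\pa+\drfn$, and the $\Sigma_1$-formula $\psi(x,y)\equiv(\feps(x)=y)$. First I would check that the proposition actually applies. The predicate $\pro_\pa^\diamond$ is $\Delta_1$ in $\isigma_1$, hence (passing to an equivalent $\Pi_1$-formula over $\isigma_1$) a legitimate proof predicate in the sense of Definition~\ref{def:proof-predicate-general}; its associated $\Sigma_1$-reflection principle is exactly $\drfn$, which $\mathbf T$ proves by definition. And $\mathbf T$ is a sound extension of $\isigma_1$: soundness of $\pa+\drfn$ follows from soundness of $\pa$ together with the fact that $\drfn$ is true in $\mathbb N$ (slow provability is sound for $\Sigma_1$-sentences since each $\isigma_{n+1}$ is). Thus the hypotheses on $\pro$ and $\mathbf T$ hold.

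Next I would identify $F_\psi$ and choose $f$. Since $\feps$ is total and strictly increasing, for $\psi(x,y)\equiv(\feps(x)=y)$ the function $F_\psi$ of the proposition is just $F_\psi(n)=\feps(n)$ — the least (indeed only) witness. For the function $f$ I would take $f(n):=\langle\feps(n\dotminus 1),\feps(n\dotminus 1)\rangle$; one checks $f(n)\geq n$ using that the Cantor pairing function is inflationary and $\feps$ dominates the identity. The point is to verify the domination hypothesis: for every $\mathbf T$-provably total $g$, the composite $g\circ f$ is eventually dominated by $F_\psi=\feps$. Here $g\circ f$ is $\mathbf T$-provably total (provably total functions of $\isigma_1\subseteq\mathbf T$ are closed under composition with the primitive recursive pairing function, and $\feps(\cdot\dotminus 1)$ is $\mathbf T$-provably total), so by Theorem~\ref{thm:total-functions-slow-reflection-mod} it is eventually dominated by $\feps$. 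More precisely, the theorem gives the sharper statement $g'(\feps(n\dotminus 1))\leq\feps(n)$ eventually, for any $\mathbf T$-provably total $g'$; absorbing the pairing function into $g'$ (i.e.\ applying the theorem to $n\mapsto g(\langle n,n\rangle)$) yields $(g\circ f)(n)\leq\feps(n)$ for all large $n$. This is the only place the full strength of the slow-reflection analysis enters, and it is the main obstacle, already discharged by Theorem~\ref{thm:total-functions-slow-reflection-mod}.

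With all hypotheses verified, Proposition~\ref{prop:lower-bounds-proof-general} yields a bound $N$ such that any $p$ with $\pro_\pa^\diamond(\overline p,\overline{\ulcorner\exists_y\,\feps(\overline n)=y\urcorner})$ true satisfies $p>f(n)=\langle\feps(n\dotminus 1),\feps(n\dotminus 1)\rangle$ for all $n\geq N$. Unfolding Definition~\ref{def:proof-predicate-general}, "$p$ is a proof of $\feps(\overline n)\!\downarrow$" in the sense induced by $\pro_\pa^\diamond$ is precisely "$p$ is a slow $\pa$-proof of $\feps(\overline n)\!\downarrow$"; note $\exists_y\,\feps(\overline n)=y$ is literally $\feps(\overline n)\!\downarrow$ with $\psi(x,y)\equiv(\feps(x)=y)$. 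This is exactly the claimed inequality, completing the proof. The only routine verifications left implicit are the closure of $\mathbf T$-provably total functions under the relevant primitive recursive operations (from \cite[Theorems I.1.54, I.4.34 or similar]{hajek91}) and the elementary inequality $f(n)\geq n$.
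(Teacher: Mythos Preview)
Your approach is essentially the paper's: apply Proposition~\ref{prop:lower-bounds-proof-general} with the slow proof predicate, $\psi(x,y)\equiv(\feps(x)=y)$, and $f(n)=\langle\feps(n\dotminus 1),\feps(n\dotminus 1)\rangle$, then discharge the domination hypothesis via Theorem~\ref{thm:total-functions-slow-reflection-mod} applied to $m\mapsto g(\langle m,m\rangle)$. The paper takes $\mathbf T=\isigma_1+\drfn$ rather than your $\pa+\drfn$, but either works, as the paper itself remarks just before stating Theorem~\ref{thm:total-functions-slow-reflection-mod}.

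One point needs correction. You assert that ``$\feps(\cdot\dotminus 1)$ is $\mathbf T$-provably total'' and hence that $g\circ f$ is $\mathbf T$-provably total, invoking the ``in particular'' clause of Theorem~\ref{thm:total-functions-slow-reflection-mod}. This is false: $\feps$ is \emph{not} provably total in $\pa+\drfn$ --- that is precisely the content of Theorem~\ref{thm:total-functions-slow-reflection-mod} (any provably total function of $\pa+\drfn$ is eventually dominated by $\feps$, while $\feps$ is not eventually dominated by itself in the strict sense needed; more directly, $\pa+\drfn\nvdash\feps\!\downarrow$ as noted after the theorem). So $g\circ f$ need not be $\mathbf T$-provably total, and your first justification of the domination hypothesis fails. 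Fortunately your ``more precisely'' sentence gives the right argument: apply the sharp form of Theorem~\ref{thm:total-functions-slow-reflection-mod} to the $\mathbf T$-provably total function $g'(m)=g(\langle m,m\rangle)$ to get $g(f(n))=g'(\feps(n\dotminus 1))\leq\feps(n)$ for large $n$. That is exactly what the paper does, and it is the only way the domination check goes through. Just delete the first justification.
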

To avoid misunderstanding we recall that $\langle\cdot,\cdot\rangle$ denotes the Cantor pairing.
\begin{proof}
 We apply Proposition \ref{prop:lower-bounds-proof-general} to the proof predicate $\pro_\pa^\diamond(p,\varphi)$, the theory $\mathbf T=\isigma_1+\drfn$, the formula $\psi(x,y)\equiv \feps(x)=y$ (so that $F_\psi$ is the function $\feps$), and the function $n\mapsto\langle\feps(n\dotminus 1),\feps(n\dotminus 1)\rangle$ at the place of $f$. Let us verify the assumptions of Proposition \ref{prop:lower-bounds-proof-general}: By (\ref{eq:slow-rfn-rfn-fragments}) we have
\begin{equation*}
 \isigma_1+\rfn(\pa)\vdash\drfn,
\end{equation*}
where $\rfn(\pa)$ denotes the usual uniform $\Sigma_1$-reflection principle for Peano Arithmetic. This shows that the theory $\isigma_1+\drfn$ is sound. Next, using \cite[Proposition 2.5]{solovay81} we have
 \begin{equation*}
 n\leq\feps(n\dotminus 1)\leq\langle\feps(n\dotminus 1),\feps(n\dotminus 1)\rangle.
 \end{equation*}
Finally, consider an arbitrary function $g$ that is provably total in the theory $\isigma_1+\drfn$. We have to show that there is a number $N$ such that we have
\begin{equation*}
g(\langle\feps(n\dotminus 1),\feps(n\dotminus 1)\rangle)\leq\feps(n)\qquad\text{for all $n\geq N$}.
\end{equation*}
This follows from Theorem \ref{thm:total-functions-slow-reflection-mod}, applied not to $g$ itself but rather to the function $m\mapsto g(\langle m,m\rangle)$, which is still provably total in the theory $\isigma_1+\drfn$. Now Proposition \ref{prop:lower-bounds-proof-general} gives us precisely the claim.
\end{proof}

It is easy to deduce bounds for proofs in the fragments of Peano Arithmetic:

\begin{theorem}\label{thm:no-small-proof-feps-defined}
 There is a number $N$ such that for all $n\geq N$ no proof of the statement $\feps(\overline n)\!\downarrow$ in the theory $\isigma_n$ can have code less than or equal to $\feps(n\dotminus 1)$.
\end{theorem}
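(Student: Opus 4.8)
The plan is to reduce Theorem~\ref{thm:no-small-proof-feps-defined} to the lower bound on slow $\pa$-proofs that was just established in Lemma~\ref{leq:lower-bound-slow-proofs}, using the preprocessing device of Remark~\ref{rmk:preprocess-proofs}. The key observation is that an ordinary $\isigma_n$-proof of $\feps(\overline n)\!\downarrow$ can be converted into a slow $\pa$-proof of the same formula essentially for free: by definition, a pair $\langle q, N\rangle$ with $N = \feps(n\dotminus 1)$ and $q$ a $\isigma_{(n\dotminus 1)+1} = \isigma_n$-proof of $\feps(\overline n)\!\downarrow$ is precisely a slow $\pa$-proof of $\feps(\overline n)\!\downarrow$ (for $n\geq 1$, so that $(n\dotminus 1)+1 = n$). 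So first I would fix the number $N$ provided by Lemma~\ref{leq:lower-bound-slow-proofs}, enlarge it if necessary so that $N \geq 1$, and take an arbitrary $n \geq N$.

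Next, suppose for contradiction that $q$ is an ordinary $\isigma_n$-proof of $\feps(\overline n)\!\downarrow$ with $q \leq \feps(n\dotminus 1)$. Form the pair $p := \langle q, \feps(n\dotminus 1)\rangle$. By the definition of $\pro_\pa^\diamond$ (taking the witness $x = n\dotminus 1$, so that the first conjunct reads $\pro_{\isigma_n}(q,\feps(\overline n)\!\downarrow)$ and the second reads $\feps(n\dotminus 1) = \pi_2(p)$), the pair $p$ is a slow $\pa$-proof of $\feps(\overline n)\!\downarrow$. Now I need the bound $p \leq \langle \feps(n\dotminus 1), \feps(n\dotminus 1)\rangle$ in order to contradict Lemma~\ref{leq:lower-bound-slow-proofs}: this follows from $q \leq \feps(n\dotminus 1)$ together with monotonicity of the Cantor pairing function in its first argument, $\langle q, \feps(n\dotminus 1)\rangle \leq \langle \feps(n\dotminus 1), \feps(n\dotminus 1)\rangle$. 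But Lemma~\ref{leq:lower-bound-slow-proofs} says that any slow $\pa$-proof $p$ of $\feps(\overline n)\!\downarrow$ with $n \geq N$ satisfies $p > \langle \feps(n\dotminus 1), \feps(n\dotminus 1)\rangle$, which is the desired contradiction. Hence no such $q$ exists, i.e.\ every $\isigma_n$-proof of $\feps(\overline n)\!\downarrow$ has code strictly greater than $\feps(n\dotminus 1)$.

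The only real subtlety—and the step I expect to require a little care—is the bookkeeping around coding conventions: one must make sure that forming the pair $\langle q, \feps(n\dotminus 1)\rangle$ does not increase the code too much, and more precisely that the projections behave as expected so that $\pro_\pa^\diamond(\overline p, \overline{\ulcorner\feps(\overline n)\!\downarrow\urcorner})$ genuinely holds in the standard model. This is exactly the kind of situation anticipated in Remark~\ref{rmk:preprocess-proofs}: the map $q \mapsto \langle q, \feps(n\dotminus 1)\rangle$ is the primitive recursive (in fact monotone, with $\langle q, \feps(n\dotminus 1)\rangle \geq q$) transformation $h$ turning an $\isigma_n$-proof of $\feps(\overline n)\!\downarrow$ into a slow $\pa$-proof of the same $\Sigma_1$-formula, and the remark tells us precisely that lower bounds on slow proofs pull back to lower bounds on $\isigma_n$-proofs. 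If one wanted to avoid invoking the remark and argue directly, the contradiction above is already complete; either way the argument is short once Lemma~\ref{leq:lower-bound-slow-proofs} is in hand.
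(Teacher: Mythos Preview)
Your proof is correct and essentially identical to the paper's: both take the bound $N$ from Lemma~\ref{leq:lower-bound-slow-proofs} (assumed $\geq 1$), form $p=\langle q,\feps(n\dotminus 1)\rangle$ from a hypothetical short $\isigma_n$-proof $q$, observe that $p$ is a slow $\pa$-proof by definition, and derive a contradiction from $p\leq\langle\feps(n\dotminus 1),\feps(n\dotminus 1)\rangle$ via monotonicity of the Cantor pairing. Your extra commentary about Remark~\ref{rmk:preprocess-proofs} is unnecessary here, as you yourself note---the direct contradiction is already complete.
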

\begin{proof}
 We can assume that the bound $N$ in Lemma \ref{leq:lower-bound-slow-proofs} is bigger than zero. Let us show that the present result holds with the same bound: Aiming at a contradiction, suppose that $q\leq\feps(n\dotminus 1)$ is an $\isigma_n$-proof of the formula $\feps(\overline n)\!\downarrow$, for some $n\geq N$. By definition $\langle q,\feps(n\dotminus 1)\rangle$ is a slow $\pa$-proof of $\feps(\overline n)\!\downarrow$. Thus the inequality
\begin{equation*}
 \langle q,\feps(n\dotminus 1)\rangle\leq\langle\feps(n\dotminus 1),\feps(n\dotminus 1)\rangle
\end{equation*}
contradicts Lemma \ref{leq:lower-bound-slow-proofs}.
\end{proof}

To deduce corresponding results for instances of the Paris-Harrington Principle, recall the function $(n,k)\mapsto\sigma(n,k)$ defined just before Corollary \ref{cor:minimal-proofs-fragments-immediate} above. We need to link this function to the function $\feps$:

\begin{lemma}[{\cite{solovay81}}]\label{lem:bound-solovay-sigmas-new}
 We have
 \begin{equation*}
 \feps(n)\leq\sigma(n+2,10^{35n^2})\leq\sigma(n+3,8)\qquad\text{for all $n\geq 15$.}
\end{equation*}
\end{lemma}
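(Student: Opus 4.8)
The plan is to cite the relevant estimates from Ketonen–Solovay \cite{solovay81} and assemble them. There are two inequalities to establish: first $\feps(n)\leq\sigma(n+2,10^{35n^2})$, and second $\sigma(n+2,10^{35n^2})\leq\sigma(n+3,8)$. The second inequality is the purely combinatorial one and should be the easier of the two, so I would dispose of it first. The idea is that increasing the arity of the Paris–Harrington statement lets one simulate many colours by a single extra dimension: a colouring of $[N]^{n+2}$ by few colours can encode a colouring of $[N]^{n+1}$ by many colours, provided $N$ is large enough, and a homogeneous set for the former that is large in the Paris–Harrington sense yields one for the latter. Concretely one checks, using the standard product/stepping-up constructions for Ramsey-type statements (as in \cite{solovay81} or \cite{hajek91}), that $\sigma(n+3,8)\geq\sigma(n+2,c)$ whenever $8$ colours in arity $n+3$ suffice to code $c$ colours in arity $n+2$; the bound $c=10^{35n^2}$ is comfortably within what the stepping-up yields for all $n\geq 15$. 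I expect this to reduce to locating the precise inequality among the lemmas of \cite[Section 2]{solovay81} and checking the numerical side condition.

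The first inequality $\feps(n)\leq\sigma(n+2,10^{35n^2})$ is the substantive one and is exactly the content of the Ketonen–Solovay analysis relating the Paris–Harrington function to the fast-growing hierarchy. Here I would invoke their lower bound on $\sigma$ in terms of the functions $F_\alpha$: roughly, $\sigma(k,m)$ with parameters growing appropriately dominates $F_{\omega_k}$ or a similar ordinal level, and in particular $\sigma(n+2,10^{35n^2})\geq F_{\omega_{n+1}}(n)=\feps(n)$ once the parameters are calibrated. The bookkeeping consists in matching the arity $n+2$ and colour count $10^{35n^2}$ to the ordinal $\omega_{n+1}$ and the argument $n$ in $\feps(n)=F_{\omega_{n+1}}(n)$, using the definition $\feps(x)\simeq F_{\omega_{x+1}}(x)$ recalled earlier in the paper, together with the standard monotonicity properties of $F_\alpha$ in $\alpha$ and in the argument. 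The constant $10^{35n^2}$ and the threshold $n\geq 15$ are precisely the values for which Ketonen and Solovay's explicit estimates go through, so the proof amounts to quoting their Lemmas and substituting.

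The main obstacle is simply that this is a citation-driven lemma: the real work was done in \cite{solovay81}, and the task is to extract the two inequalities in exactly the stated form, with the stated constants and threshold. The delicate point is ensuring the numerical constants line up — the $35n^2$ in the exponent and the cutoff $15$ are not arbitrary but come from tracking the stepping-up construction and the relationship between $\omega_{n+1}$-levels and finite colour counts through Ketonen–Solovay's machinery. I would therefore structure the proof as: (i) recall $\feps(n)=F_{\omega_{n+1}}(n)$; (ii) quote the Ketonen–Solovay lower bound giving $F_{\omega_{n+1}}(n)\leq\sigma(n+2,10^{35n^2})$ for $n\geq 15$; (iii) quote or verify the colour-reduction inequality $\sigma(n+2,10^{35n^2})\leq\sigma(n+3,8)$ via stepping up, again valid for $n\geq 15$. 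No genuinely new argument is needed beyond careful reference to \cite{solovay81}.
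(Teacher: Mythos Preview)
Your overall architecture is right: the first inequality comes from the Ketonen--Solovay lower bound machinery and the second from their colour-reduction via increasing arity (their Lemma~3.9, simulating an $(n+2,c)$-algebra by an $(n+3,8)$-algebra). But there is a genuine gap in your plan to simply ``quote their Lemmas and substitute''. The constant $10^{35n^2}$ does \emph{not} appear in \cite{solovay81}; their Lemma~3.6 gives $\sigma(n+2,10^{23n^2})$. The discrepancy arises because this paper uses a different convention for fundamental sequences: here $\{\beta+\omega^{\delta+1}\cdot(k+1)\}(n)=\beta+\omega^{\delta+1}\cdot k+\omega^\delta\cdot(n+1)$, whereas Ketonen--Solovay use $\omega^\delta\cdot n$ in the last summand. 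Consequently the function $\feps$ of the present paper is not literally the $F_{\varepsilon_0}$ of \cite{solovay81}, and a direct citation does not establish the stated bound.

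What the proof actually requires is to trace through the Ketonen--Solovay argument (their Propositions~2.9--2.10, Lemmas~3.2.2--3.2.3, Lemma~3.4, Theorem~3.5) with the shifted coefficients, checking at each step how the bounds degrade. The net effect is that the $(n+2,10^{23n^2})$-algebra becomes an $(n+2,10^{35n^2})$-algebra. The threshold $n\geq 15$ then enters in the second inequality, where one needs $F_3^{n+1}(n+2)\geq 10^{35n^2}$ to invoke their Lemma~3.9; this is the numerical check you gesture at, but with the corrected exponent. So the lemma is not citation-driven in the way you suggest: it is a careful re-derivation under a changed convention, and your proposal as written would produce the wrong constant.
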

\begin{proof}
This is the result of \cite[Lemma 3.6, Theorem 3.10]{solovay81}, except that \cite{solovay81} works with a slightly different version of fundamental sequences, setting
\begin{equation*}
 \{\beta+\omega^\gamma\cdot(k+1)\}(n) = \beta+\omega^\gamma\cdot k+\omega^\delta\cdot n\quad\text{in case $\gamma=\delta+1$}.
\end{equation*}
With this definition, descending to the $n$-th member of the fundamental sequence can introduce a coefficient (bounded by) $n$. In our case the new coefficients are bounded by $n+1$. The overall bound $\sigma(n+2,10^{23n^2})$ of \cite[Lemma 3.6]{solovay81} then increases to our $\sigma(n+2,10^{35n^2})$.\\
Let us describe the concrete changes that are necessary (the reader will have to consult \cite{solovay81} for context): First, the bound of \cite[Proposition 2.9]{solovay81} increases from $|T_{k,c,n}|\leq (n+1)_k^c$ to $|T_{k,c,n}|\leq (n+2)_k^c$. At the same time the rather generous bound $|T_{k,c,n}|\leq 2_{k-1}^{(n^{6c})}$ of \cite[Proposition 2.10]{solovay81} remains valid without change. Thus \cite[Lemma 3.1]{solovay81} remains valid, and so does \cite[Lemma 3.2.1]{solovay81}. A small change is required in \cite[Lemma 3.2.2]{solovay81}: We need to weaken the condition $g(x_0,\dots ,x_{n-1})\leq x_0$ to $g(x_0,\dots ,x_{n-1})\leq x_0+1$. It is easy to see that $g$ is then controlled by an $(n+1,10^5)$-algebra (instead of an $(n+1,10^4)$-algebra). Consequently, \cite[Lemma 3.2.3]{solovay81} now constructs an $(n+1,10^{5c})$-algebra. One can check that \cite[Lemma 3.4]{solovay81} remains valid in spite of the prior changes: The bound of \cite[Lemma 3.2.3]{solovay81} is still strong enough for the base case of the proof; in the step, the bound is generous enough to accomodate the fact that $G_3$ is now an $(n+2,10^5)$-algebra. It follows that \cite[Theorem 3.5]{solovay81} remains unchanged: For $n,k\geq 1$ the function $F_{\omega_n^k}$ is captured by an $(n+2,10^{n\cdot(12n+2k+8)})$-algebra. Parallel to \cite[Lemma 3.6]{solovay81} we can now deduce the desired bound: We have $\{\omega_{n+1}\}(n)=\omega_n^{n+1}$ and thus $\feps(n)=F_{\omega_{n+1}}(n)=F_{\omega_n^{n+1}}(n)$ (as opposed to $\feps(n)=F_{\omega_n^n}(n)$ in the original \cite[Lemma 3.6]{solovay81}). Let $G_0$ be an $(n+2,10^{14n^2+20n})$-algebra that captures $F_{\omega_n^{n+1}}$. Let $G_1$ be an $(n+2,7)$-algebra such that $\min(S)\geq 2n+3$ holds whenever $S$ is suitable for $G_1$. In view of
\begin{equation*}
 7\cdot 10^{14n^2+20n}\leq 10^{14n^2+20n+1}\leq 10^{35n^2}\quad\text{(for $n\geq 1$)}
\end{equation*}
we can choose an $(n+2,10^{35n^2})$-algebra $G$ which simulates $G_0$ and $G_1$. If $S$ is suitable for $G$ then we have
\begin{equation*}
 max(S)\geq s_2>s_1\geq F_{\omega_n^{n+1}}(s_0)\geq F_{\omega_n^{n+1}}(n)=\feps(n).
\end{equation*}
This means that the restriction
\begin{equation*}
 G\restriction_{[\feps(n)]^{n+2}}:[\feps(n)]^{n+2}\rightarrow 10^{35n^2}
\end{equation*}
admits no suitable set. Thus we have $\feps(n)<\sigma(n+2,10^{35n^2})$.\\
It remains to check $\sigma(n+2,10^{35n^2})\leq\sigma(n+3,8)$. This is parallel to the proof of \cite[Theorem 3.10]{solovay81}: Observe that we have
\begin{equation*}
 F_3^{n+1}(n+2)\geq F_3(n)\geq 2^{2^n}\geq2^{4\cdot 35n^2}\geq 10^{35n^2}\qquad\text{for $n\geq 15$}.
\end{equation*}
Thus by \cite[Lemma 3.9]{solovay81} each $(n+2,10^{35n^2})$-algebra can be simulated by an $(n+3,8)$-algebra, and this implies the claim. Note that the condition $n\geq 15$ could easily be replaced by a smaller bound.
\end{proof}

This implies the following result, which we will need in our applications:

\begin{corollary}\label{cor:bound-slow-reflection-function-ph}
 For any provably total function $g$ of $\isigma_1+\drfn$ there is a number $N$ such that we have
 \begin{equation*}
 g(\feps(n\dotminus 1))\leq\sigma(n+2,10^{35n^2})\leq\sigma(n+3,8)\qquad\text{for all $n\geq N$}.
\end{equation*}
\end{corollary}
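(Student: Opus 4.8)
The plan is to combine Theorem \ref{thm:total-functions-slow-reflection-mod} with the numerical bound of Lemma \ref{lem:bound-solovay-sigmas-new}, the only genuinely new ingredient being the observation that the relevant function class is closed under the relevant operation. Concretely, let $g$ be provably total in $\isigma_1+\drfn$. Since $\isigma_1+\drfn$ is a subtheory of $\pa+\drfn$ (indeed $\pa$ extends $\isigma_1$), the function $g$ is also provably total in $\pa+\drfn$. Hence Theorem \ref{thm:total-functions-slow-reflection-mod} applies and yields a number $N_0$ with $g(\feps(n\dotminus 1))\leq\feps(n)$ for all $n\geq N_0$.

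Next I would simply chain this with Lemma \ref{lem:bound-solovay-sigmas-new}, which gives $\feps(n)\leq\sigma(n+2,10^{35n^2})\leq\sigma(n+3,8)$ for all $n\geq 15$. Setting $N:=\max\{N_0,15\}$ we obtain, for all $n\geq N$,
\begin{equation*}
 g(\feps(n\dotminus 1))\leq\feps(n)\leq\sigma(n+2,10^{35n^2})\leq\sigma(n+3,8),
\end{equation*}
which is exactly the assertion of the corollary. So the statement is essentially a transitivity argument together with a check that we may invoke Theorem \ref{thm:total-functions-slow-reflection-mod} with the weaker base theory $\isigma_1$ in place of $\pa$.

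There is no real obstacle here: the deferred Theorem \ref{thm:total-functions-slow-reflection-mod} does all the work, and Lemma \ref{lem:bound-solovay-sigmas-new} is already proved. The only point requiring the slightest care is the remark made in the paragraph following Theorem \ref{thm:total-functions-slow-reflection-mod}, namely that bounding the provably total functions of $\isigma_1+\drfn$ would already suffice for applications — here we are on the comfortable side of that comparison, using the stronger statement (about $\pa+\drfn$) to conclude something about the weaker theory $\isigma_1+\drfn$. One should also double-check that $n\dotminus 1$ is well-behaved at small $n$, but since we are free to take $N$ as large as we like (and at least $15$), this causes no difficulty.
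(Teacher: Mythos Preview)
Your proof is correct and follows exactly the paper's approach: the paper's own proof simply says that the corollary follows from Theorem \ref{thm:total-functions-slow-reflection-mod} and Lemma \ref{lem:bound-solovay-sigmas-new}, and you have spelled out precisely that chain of inequalities together with the (trivial) observation that a function provably total in $\isigma_1+\drfn$ is a fortiori provably total in $\pa+\drfn$.
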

\begin{proof}
 This follows from Theorem \ref{thm:total-functions-slow-reflection-mod} and Lemma \ref{lem:bound-solovay-sigmas-new}.
\end{proof}

Similar to Lemma \ref{leq:lower-bound-slow-proofs}, slow proofs of certain instances of the Paris-Harrington Principle must be long:

\begin{lemma}\label{leq:lower-bound-slow-proofs-ph}
The following holds:
\begin{enumerate}[label=(\alph*)]
 \item There is a number $K'$ such that we have $p>\langle\feps(n\dotminus 1),\feps(n\dotminus 1)\rangle$ for any slow $\pa$-proof $p$ of $\exists_N\ph (\overline{10^{35n^2}},\overline{n+3},\overline{n+2},N)$ with $n\geq K'$.
\item There is a number $K'$ such that we have $p>\langle\feps(n\dotminus 1),\feps(n\dotminus 1)\rangle$ for any slow $\pa$-proof $p$ of $\exists_N\ph (8,\overline{n+4},\overline{n+3},N)$ with $n\geq K'$.
\end{enumerate}
\end{lemma}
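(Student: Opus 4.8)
The plan is to apply Proposition~\ref{prop:lower-bounds-proof-general} to the slow proof predicate $\pro_\pa^\diamond(p,\varphi)$ and the theory $\mathbf T=\isigma_1+\drfn$, just as in the proof of Lemma~\ref{leq:lower-bound-slow-proofs}, but with a formula $\psi$ tailored to the Paris--Harrington function $\sigma$ rather than to $\feps$. For part~(a) I would let $\psi(x,y)$ be a $\Sigma_1$-formula that is $\isigma_1$-provably equivalent to ``$\ph(10^{35x^2},x+3,x+2,y)$''; such a formula exists because $\ph(k,m,n,N)$ is $\Delta_1$ in $\isigma_1$ and the parameters $10^{35x^2},x+3,x+2$ are given by elementary, $\isigma_1$-provably total functions. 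On the standard model the associated function $F_\psi$ of Proposition~\ref{prop:lower-bounds-proof-general} is then exactly $n\mapsto\sigma(n+2,10^{35n^2})$. For part~(b) one takes instead $\psi(x,y)$ equivalent over $\isigma_1$ to ``$\ph(8,x+4,x+3,y)$'', so that $F_\psi(n)=\sigma(n+3,8)$.

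With the gauge function $f(n)=\langle\feps(n\dotminus 1),\feps(n\dotminus 1)\rangle$ the hypotheses of Proposition~\ref{prop:lower-bounds-proof-general} are checked exactly as before: $\isigma_1+\drfn$ is sound (because $\isigma_1+\rfn(\pa)\vdash\drfn$), and $n\leq\feps(n\dotminus 1)\leq f(n)$. For the domination condition, given $g$ provably total in $\isigma_1+\drfn$ the function $m\mapsto g(\langle m,m\rangle)$ is again provably total, so Corollary~\ref{cor:bound-slow-reflection-function-ph} yields an $N$ with $g(f(n))=g(\langle\feps(n\dotminus 1),\feps(n\dotminus 1)\rangle)\leq\sigma(n+2,10^{35n^2})\leq\sigma(n+3,8)$ for all $n\geq N$; in each of the two cases the relevant member of this chain is $F_\psi(n)$. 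Hence Proposition~\ref{prop:lower-bounds-proof-general} supplies a $K_0$ such that every slow $\pa$-proof $p$ of $\exists_y\psi(\overline n,y)$ with $n\geq K_0$ satisfies $p>f(n)$.

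It remains to pass from $\exists_y\psi(\overline n,y)$ to the formula actually named in the lemma: substituting the numeral $\overline n$ into the term $10^{35x^2}$ yields the term $10^{35\overline n^2}$, not the numeral $\overline{10^{35n^2}}$, and similarly for the arity and size arguments, so the two formulas are $\isigma_1$-provably equivalent but not literally equal. I would close this gap with Remark~\ref{rmk:preprocess-proofs}. From a slow $\pa$-proof $p=\langle q_0,\feps(m)\rangle$ of $\exists_N\ph(\overline{10^{35n^2}},\overline{n+3},\overline{n+2},N)$ one reads $n$ off the conclusion; appending to the $\isigma_{m+1}$-component $q_0$ a derivation of ``$10^{35\overline n^2}=\overline{10^{35n^2}}\land\overline n+3=\overline{n+3}\land\overline n+2=\overline{n+2}$'' together with the fixed $\isigma_1$-proof of the $\Delta_1$-equivalence, followed by modus ponens, turns $q_0$ into an $\isigma_{m+1}$-proof of $\exists_y\psi(\overline n,y)$. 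The witness component $\feps(m)$ is left untouched, and since $m\leq\pi_2(p)=\feps(m)$ the whole transformation is bounded by a monotone primitive recursive $h$ with $h(q)\geq q$. Applying Proposition~\ref{prop:lower-bounds-proof-general} with $f$ replaced by $h\circ f$, which still satisfies the hypotheses, and then running the contradiction argument of Remark~\ref{rmk:preprocess-proofs}, gives $q>f(n)=\langle\feps(n\dotminus 1),\feps(n\dotminus 1)\rangle$ for every slow $\pa$-proof $q$ of the Paris--Harrington instance with $n$ above a suitable $K'$. Part~(b) is entirely parallel, using ``$\ph(8,x+4,x+3,y)$'' and the bound $\sigma(n+3,8)$ from Corollary~\ref{cor:bound-slow-reflection-function-ph}. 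I expect the only real obstacle to be the bookkeeping in this last step: making the term-versus-numeral preprocessing explicit while verifying that it preserves the slow-proof structure and stays primitive recursive with size growth elementary in $n$.
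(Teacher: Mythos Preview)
Your proposal is correct and follows essentially the same route as the paper: apply Proposition~\ref{prop:lower-bounds-proof-general} to the slow proof predicate and $\mathbf T=\isigma_1+\drfn$ with a $\Sigma_1$-formula $\psi(x,y)$ whose $F_\psi$ is $n\mapsto\sigma(n+2,10^{35n^2})$, verify the domination hypothesis via Corollary~\ref{cor:bound-slow-reflection-function-ph}, and handle the numeral-versus-term mismatch by the preprocessing of Remark~\ref{rmk:preprocess-proofs}, recovering $m$ from the second component $\feps(m)$ of a slow proof. The paper organizes the argument by applying Proposition~\ref{prop:lower-bounds-proof-general} directly with the gauge $h\circ f$ rather than first with $f$ and then upgrading, but this is only a difference of exposition.
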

\begin{proof}
 We only show (a). The proof of (b) is similar and somewhat easier. Compared to the proof of Lemma \ref{leq:lower-bound-slow-proofs}, the main subtlety is that the formulas
\begin{equation*}
 \varphi_n:\equiv\exists_N\ph (\overline{10^{35n^2}},\overline{n+3},\overline{n+2},N)
\end{equation*}
are not of the form $\varphi(\overline n)$, i.e.\ parametrized by the $n$-th numeral. To make Proposition \ref{prop:lower-bounds-proof-general} applicable we need to preprocess proofs of these formulas, as sketched in Remark \ref{rmk:preprocess-proofs}: Let $e(x)=z$ be a $\Sigma_1$-formula such that we have
\begin{equation*}
 \mathbb N\vDash e(\overline n)=\overline k\qquad\Leftrightarrow\qquad k=10^{35n^2}
\end{equation*}
and $\isigma_1\vdash\forall_x\exists_z\, e(x)=z$. In view of the latter, the witnesses to all unbounded quantifiers of the $\Sigma_1$-formula $\exists_z\,e(\overline n)=z$ are bounded by a primitive recursive function in $n$. By the proof of $\Sigma_1$-completeness there is a primitive recursive function $p_e:\mathbb N^2\rightarrow\mathbb N$ such that $p_e(n,k)$ is an $\isigma_k$-proof of $e(\overline n)=\overline{10^{35n^2}}$.\\
Next, let $\psi(x,y)$ be a $\Sigma_1$-formula with
\begin{equation}\label{eq:sigma1-equivalence-paris-harrington}
 \isigma_1\vdash \psi(x,y)\leftrightarrow\exists_z(e(x)=z\land\ph (z,x+3,x+2,y)).
\end{equation}
Following Remark \ref{rmk:preprocess-proofs}, we need a primitive recursive function $h:\mathbb N\rightarrow\mathbb N$ which transforms a slow $\pa$-proof of $\varphi_n$ into a slow $\pa$-proof of $\exists_y\psi(\overline n,y)$. Let us first construct a primitive recursive function $h':\mathbb N^2\rightarrow\mathbb N$ such that $h'(k,q)$ is an $\isigma_{k+1}$-proof of $\exists_y\psi(\overline n,y)$ if $q$ is an $\isigma_{k+1}$-proof of $\varphi_n$: Given a proof $q$ as described, we can read off its end formula $\varphi_n$ and then the number $n$. Recall that $p_e(n,k+1)$ is an $\isigma_{k+1}$-proof of $e(\overline n)=\overline{10^{35n^2}}$. Combining this with $q$ and introducing an existential quantifier yields an $\isigma_{k+1}$-proof of
\begin{equation*}
 \exists_z(e(\overline n)=z\land\exists_N\ph (z,\overline{n+3},\overline{n+2},N)).
\end{equation*}
It is not unreasonable to assume that $\overline{n+3}$ (resp.\ $\overline{n+2}$) is the same term as $\overline n+3$ (resp.\ $\overline n+2$). Even if not, there are primitive recursive functions which map a pair $(k,n)$ to $\isigma_{k+1}$-proofs of $\overline{n+3}=\overline n+3$ and $\overline{n+2}=\overline n+2$. We then apply the equality axioms and prefix the existentially quantified $N$, giving
an $\isigma_{k+1}$-proof of
\begin{equation*}
 \exists_y\exists_z(e(\overline n)=z\land\ph (z,\overline n+3,\overline n+2,y)).
\end{equation*}
Invoking the equivalence (\ref{eq:sigma1-equivalence-paris-harrington}) we get the desired proof $h'(k,q)$ of $\exists_y\psi(\overline n,y)$. Now to construct $h$, assume that $p=\langle q,M\rangle$ is a slow $\pa$-proof of $\varphi_n$. By definition there is an $m\leq M$ such that $q$ is an $\isigma_{m+1}$ proof of $\varphi_n$ and such that we have $\feps(m)=M$. Recall that the relation $\feps(x)=y$ is primitive recursively decidable, and that $\feps$ is strictly monotone. Thus we can primitive recursively determine the unique $m$ with the stated property. Now it suffices to set
\begin{equation*}
  h(p):=\langle h'(m,q), M\rangle.                                                                                                                                 \end{equation*}
We need to increase $h$ to make it monotone and ensure $h(p)\geq p$. Clearly, the increased function still satisfies the following: If $p$ is a slow $\pa$-proof of $\varphi_n$ then there is a slow $\pa$-proof of $\exists_y\psi(\overline n,y)$ below $h(p)$.\\
Now we apply Proposition \ref{prop:lower-bounds-proof-general} to the proof predicate $\pro_\pa^\diamond(p,\varphi)$, the theory $\mathbf T=\isigma_1+\drfn$, the $\Sigma_1$-formula $\psi(x,y)$ defined above, and the function $n\mapsto h(\langle\feps(n\dotminus 1),\feps(n\dotminus 1)\rangle)$ at the place of $f$. In view of (\ref{eq:sigma1-equivalence-paris-harrington}) we have
\begin{equation*}
 \mathbb N\vDash\psi(\overline n,\overline m)\quad\Leftrightarrow\quad\mathbb N\vDash\ph (\overline{10^{35n^2}},\overline{n+3},\overline{n+2},\overline m),
\end{equation*}
so that $F_\psi$ is the function $n\mapsto\sigma(n+2,10^{35n^2})$. Concerning the assumptions of Proposition \ref{prop:lower-bounds-proof-general}, in view of $h(p)\geq p$ (see also the proof of Lemma \ref{leq:lower-bound-slow-proofs}) we have
\begin{equation*}
 h(\langle\feps(n\dotminus 1),\feps(n\dotminus 1)\rangle)\geq n\quad\text{for all $n$}.
\end{equation*}
Coming to the other assumption, let $g$ be any provably total function of $\isigma_1+\drfn$. We must show that $n\mapsto g(h(\langle\feps(n\dotminus 1),\feps(n\dotminus 1)\rangle))$ is eventually dominated by the function $n\mapsto\sigma(n+2,10^{35n^2})$. To see this one applies Corollary \ref{cor:bound-slow-reflection-function-ph} to the function $m\mapsto g(h(\langle m,m\rangle))$, which is still provably total in the theory $\isigma_1+\drfn$. Having verified the assumptions Proposition \ref{prop:lower-bounds-proof-general} gives us a bound $K'$ such that we have
\begin{equation*}
 p'>h(\langle\feps(n\dotminus 1),\feps(n\dotminus 1)\rangle)
\end{equation*}
whenever $p'$ is a slow $\pa$-proof of $\exists_y\psi(\overline n,y)$ with $n\geq K'$. To deduce the claim of (a), let $p$ be a slow $\pa$-proof of $\exists_N\ph (\overline{10^{35n^2}},\overline{n+3},\overline{n+2},N)$, still with $n\geq K'$. As we have seen above, this implies that there is a slow $\pa$-proof of $\exists_y\psi(\overline n,y)$ below $h(p)$. By the bound that we have just established we must have
\begin{equation*}
 h(p)>h(\langle\feps(n\dotminus 1),\feps(n\dotminus 1)\rangle).
\end{equation*}
Since $h$ is monotone this does indeed imply $p>\langle\feps(n\dotminus 1),\feps(n\dotminus 1)\rangle$.
\end{proof}

We can derive the central result of the paper, claim (\ref{eq:no-short-proofs-ph}) from the introduction:

\begin{theorem}\label{thm:no-small-proofs-ph}
The following holds:
\begin{enumerate}[label=(\alph*)]
 \item There is a number $K$ such that for all $n\geq K$ no proof of the formula $\exists_N\ph (\overline{10^{35(n\dotminus 2)^2}},\overline{n+1},\overline n,N)$ in the theory $\isigma_{n\dotminus 2}$ can have G\"odel number less than or equal to $\feps(n\dotminus 3)$.
 \item There is a number $K$ such that for all $n\geq K$ no proof of the formula $\exists_N\ph (\overline{8},\overline{n+1},\overline n,N)$ in the theory $\isigma_{n\dotminus 3}$ can have G\"odel number less than or equal to $\feps(n\dotminus 4)$.
\end{enumerate}
\end{theorem}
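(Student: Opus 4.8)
The plan is to obtain Theorem~\ref{thm:no-small-proofs-ph} as a direct corollary of Lemma~\ref{leq:lower-bound-slow-proofs-ph}, by exactly the device already used to pass from Lemma~\ref{leq:lower-bound-slow-proofs} to Theorem~\ref{thm:no-small-proof-feps-defined}: by definition an ordinary $\isigma_{k+1}$-proof $q$ of a formula $\varphi$ gives rise to the slow $\pa$-proof $\langle q,\feps(k)\rangle$ of $\varphi$, and since the Cantor pairing is strictly monotone in each argument a small $q$ produces a small slow $\pa$-proof, which the lower bounds of Lemma~\ref{leq:lower-bound-slow-proofs-ph} forbid once $n$ is large. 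Everything therefore comes down to a shift of indices and to choosing the constant $K$ large enough that all the truncated subtractions $\dotminus$ occurring in the statements coincide with honest subtraction. I expect no genuine obstacle; the single point that wants a little care is that the formula displayed in the theorem and the formula actually controlled by Lemma~\ref{leq:lower-bound-slow-proofs-ph} be literally the same term (or be matched by the primitive recursive equality proofs already invoked inside the proof of that lemma).

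For part~(a), let $K'$ be the constant supplied by Lemma~\ref{leq:lower-bound-slow-proofs-ph}(a) and put $K:=K'+3$. Suppose towards a contradiction that for some $n\ge K$ there is an $\isigma_{n\dotminus 2}$-proof $q$ of $\exists_N\ph(\overline{10^{35(n\dotminus 2)^2}},\overline{n+1},\overline n,N)$ with $q\le\feps(n\dotminus 3)$. Since $n\ge 3$ we have $n\dotminus 2=(n-3)+1$ and $n\dotminus 3=n-3$, so $q$ is an $\isigma_{(n-3)+1}$-proof and $\feps(n-3)=\feps(n\dotminus 3)$; hence $\langle q,\feps(n\dotminus 3)\rangle$ is a slow $\pa$-proof of the same formula. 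That formula is nothing but $\exists_N\ph(\overline{10^{35(n-2)^2}},\overline{(n-2)+3},\overline{(n-2)+2},N)$, so applying Lemma~\ref{leq:lower-bound-slow-proofs-ph}(a) with $n-2\ge K'$ in place of its parameter yields
\[
 \langle q,\feps(n\dotminus 3)\rangle>\langle\feps((n-2)\dotminus 1),\feps((n-2)\dotminus 1)\rangle=\langle\feps(n\dotminus 3),\feps(n\dotminus 3)\rangle ,
\]
whereas $q\le\feps(n\dotminus 3)$ gives $\langle q,\feps(n\dotminus 3)\rangle\le\langle\feps(n\dotminus 3),\feps(n\dotminus 3)\rangle$ by monotonicity of the pairing in its first argument. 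This contradiction establishes~(a).

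Part~(b) is entirely analogous, now built on Lemma~\ref{leq:lower-bound-slow-proofs-ph}(b): with $K'$ its constant I set $K:=K'+4$, assume an $\isigma_{n\dotminus 3}$-proof $q\le\feps(n\dotminus 4)$ of $\exists_N\ph(\overline 8,\overline{n+1},\overline n,N)$ for some $n\ge K$, note (using $n\ge 4$, so $n\dotminus 3=(n-4)+1$ and $n\dotminus 4=n-4$, and that the colour count $8$ is the same on both sides) that $\langle q,\feps(n\dotminus 4)\rangle$ is a slow $\pa$-proof of $\exists_N\ph(\overline 8,\overline{(n-3)+4},\overline{(n-3)+3},N)$, and invoke Lemma~\ref{leq:lower-bound-slow-proofs-ph}(b) at $n-3\ge K'$ to obtain $\langle q,\feps(n\dotminus 4)\rangle>\langle\feps(n\dotminus 4),\feps(n\dotminus 4)\rangle$, which again contradicts $q\le\feps(n\dotminus 4)$. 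Together parts~(a) and~(b) are precisely claim~(\ref{eq:no-short-proofs-ph}) of the introduction and its constant-colour variant.
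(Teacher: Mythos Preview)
Your proof is correct and follows essentially the same approach as the paper: form the slow $\pa$-proof $\langle q,\feps(n-3)\rangle$ from an $\isigma_{n-2}$-proof $q$, apply Lemma~\ref{leq:lower-bound-slow-proofs-ph} at the shifted parameter, and use monotonicity of the Cantor pairing to derive a contradiction. The only differences are cosmetic (you take $K=K'+3$ where the paper takes $K=\max\{K'+2,3\}$, and you spell out part~(b) and the index arithmetic more explicitly).
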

\begin{proof}
 We only write out the proof for (a), the proof of (b) being completely parallel: Let $K'$ be the bound from Lemma \ref{leq:lower-bound-slow-proofs-ph}, and set $K:=\max\{K'+2,3\}$. Consider an arbitrary $n\geq K$ and a proof $q$ of $\exists_N\ph (\overline{10^{35(n-2)^2}},\overline{n+1},\overline n,N)$ in the theory $\isigma_{n-2}$. It follows that the pair $\langle q,\feps(n-3)\rangle$ is a slow $\pa$-proof of $\exists_N\ph (\overline{10^{35(n-2)^2}},\overline{n+1},\overline n,N)$. Lemma \ref{leq:lower-bound-slow-proofs-ph} yields
\begin{equation*}
 \langle q,\feps(n-3)\rangle>\langle\feps(n-3),\feps(n-3)\rangle.
\end{equation*}
Since the Cantor pairing is monotone we get $q>\feps(n-3)$, as desired.
\end{proof}

By claim (\ref{eq:short-proofs-ph}) from the introduction both $\exists_N\ph (\overline{10^{35(n\dotminus 2)^2}},\overline{n+1},\overline n,N)$ and $\exists_N\ph (\overline{8},\overline{n+1},\overline n,N)$ have short proofs in $\isigma_{n\dotminus 1}$. The fragment $\isigma_{n\dotminus 2}$ in part (a) of the theorem is thus optimal. Concerning (b), it is currently open whether $\exists_N\ph (\overline{8},\overline{n+1},\overline n,N)$ has a short proof in $\isigma_{n\dotminus 2}$. In any case the parameters of the Paris-Harrington Principle leave room for variation: For example, the bounds established by Loebl and Ne\v{s}et\v{r}il \cite{nesetril92} (with shorter proofs than in \cite{solovay81}) lead to similar results.

\section{The Provably Total Functions of Slow Reflection}\label{sect:provably-total-slow-reflection}

The goal of this section is to provide a proof of Theorem \ref{thm:total-functions-slow-reflection-mod}, which we already used (but did not prove) in the previous section. We will need the following characterization of uniform $\Sigma_1$-reflection over the fragments of Peano Arithmetic:

\begin{proposition}\label{prop:reflection-fragments-fast-growing}
 We have
\begin{equation*}
 \isigma_1\vdash\forall_x(F_{\omega_x}\!\downarrow\,\leftrightarrow\,\rfn(\isigma_x)).
\end{equation*}
\end{proposition}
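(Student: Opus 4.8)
The plan is to prove the equivalence $\isigma_1 \vdash \forall_x(F_{\omega_x}\!\downarrow \leftrightarrow \rfn(\isigma_x))$ by establishing the two implications separately, exploiting the well-known fact that over $\isigma_1$ the uniform $\Sigma_1$-reflection principle for $\isigma_x$ is equivalent to a statement asserting the totality of a fast-growing function indexed by the proof-theoretic ordinal $\omega_x$ of $\isigma_x$. Recall that $\isigma_x$ has proof-theoretic ordinal $\omega_x$ and that (provably in $\isigma_1$) its provably total functions are exactly those primitive recursive in the functions $F_\alpha$ for $\alpha < \omega_x$; the threshold function $F_{\omega_x}$ sits just beyond. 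I would work with the $\Delta_0$-arithmetization of the fast-growing hierarchy imported from Sommer \cite{sommer95}, so that $F_{\omega_x}\!\downarrow$ literally abbreviates a $\Sigma_1$-statement.

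For the direction $\rfn(\isigma_x) \to F_{\omega_x}\!\downarrow$, I would argue inside $\isigma_1 + \rfn(\isigma_x)$: for each fixed $n < \omega_x$ (more precisely, for each $\alpha$ in the notation system below $\omega_x$) the theory $\isigma_x$ proves $F_\alpha\!\downarrow$ by Gentzen-style transfinite induction up to $\alpha$ (available in $\isigma_x$ since $\alpha < \omega_x$, cf.\ \cite[Theorem 4.11]{wainer-fairtlough-98}); these proofs are produced by a primitive recursive function of (the code of) $\alpha$, hence $\isigma_1$ verifies $\pr_{\isigma_x}(\ulcorner F_{\dot\alpha}\!\downarrow\urcorner)$ for all $\alpha < \omega_x$. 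Applying $\rfn(\isigma_x)$ and Tarski's truth conditions yields $F_\alpha\!\downarrow$ for every such $\alpha$, and then $F_{\omega_x}(x) = F_{\{\omega_x\}(x)}(x)$ together with $\{\omega_x\}(x) = \omega_{x-1}^{x+1} < \omega_x$ gives $F_{\omega_x}\!\downarrow$. The formalization requires care to keep everything $\isigma_1$-provable rather than merely true, but the construction of Gentzen proofs is primitive recursive and the bounds are the ones already encoded in Sommer's development.

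For the converse $F_{\omega_x}\!\downarrow \to \rfn(\isigma_x)$, I would reason in $\isigma_1 + F_{\omega_x}\!\downarrow$. The standard route is via a cut-elimination / partial-truth argument: any $\isigma_x$-proof of a closed $\Sigma_1$-sentence $\varphi$ can be translated (primitive recursively in the proof code) into an $\omega$-proof with ordinal bound below $\omega_x$, and one then establishes a partial cut-elimination and soundness statement for such infinitary derivations by transfinite recursion along $\omega_x$; the totality of $F_{\omega_x}$ supplies exactly the majorization needed to carry this recursion out and bound the search for the $\Sigma_1$-witness. Concretely, one shows that if $p$ is an $\isigma_x$-proof of the closed $\Sigma_1$-sentence $\varphi$, then $\varphi$ has a true witness below $F_{\omega_x}(c)$ for some $c$ primitive recursive in $p$ and $\ulcorner\varphi\urcorner$; since $F_{\omega_x}$ is total this verifies $\true_{\Sigma_1}(\varphi)$, which is the content of $\rfn(\isigma_x)$.

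The main obstacle, and the place where I expect to spend most of the work, is the second implication: carrying out inside $\isigma_1$ the ordinal analysis of $\isigma_x$ uniformly in the parameter $x$ and extracting an explicit $F_{\omega_x}$-bound on $\Sigma_1$-witnesses. Doing this from scratch would be a substantial undertaking; however, this equivalence is essentially folklore (it is implicit in the Wainer hierarchy characterization of the provably total functions of $\isigma_x$ and appears in various guises in the literature, e.g.\ \cite[Corollary I.4.34]{hajek91} in the unparametrized fragment case), so I would present the proof at the level of citing the uniform ordinal analysis and the Sommer arithmetization, and verify only that the uniformity in $x$ goes through — in particular that the relevant bounds and proof-constructions are primitive recursive in $x$, which follows from the explicit form of $\{\omega_x\}(n)$ and the inequality \eqref{eq:bound-codes-stack-of-omega}.
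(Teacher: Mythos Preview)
Your outline is correct and follows the same proof-theoretic route that the paper points to. Note, however, that the paper does not actually give a self-contained argument for this proposition: it cites the model-theoretic proofs of Paris and Sommer and the iterated-reflection approach of Beklemishev for each fixed $x$, and then defers the uniform-in-$x$ statement to a separate companion paper \cite{freund-characterize-reflection}, where the Buchholz--Wainer infinitary system \cite{buchholz-wainer-87} is formalized using Buchholz's technique \cite{buchholz91} of finite notations for infinitary derivations. Your sketch of the harder direction (embed $\isigma_x$ into an $\omega$-logic, eliminate cuts, bound the $\Sigma_1$-witness by $F_{\omega_x}$) is precisely that programme.

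The one place where you may be underestimating the work is the remark that you would ``verify only that the uniformity in $x$ goes through''. The author states explicitly that he found no fully explicit treatment of this uniformity in the literature and wrote a dedicated paper to supply it. The difficulty is not merely that the relevant proof-transformations are primitive recursive in $x$; it is that $\isigma_1$ must \emph{prove} the soundness of embedding and cut elimination for $\isigma_x$ as $x$ varies. A naive formalization would invoke a $\Sigma_x$-truth predicate and hence escape any fixed fragment; the way around this is Buchholz's device of representing infinitary derivations by a primitive recursive notation system, so that the entire argument (including the bounding lemma) can be carried out at the $\Sigma_1$-level uniformly. Your first direction has the same issue: showing in $\isigma_1$ that $\pr_{\isigma_x}(\ulcorner F_{\dot\alpha}\!\downarrow\urcorner)$ holds for all $\alpha<\omega_x$ already requires a uniform-in-$x$ formalization of the Gentzen construction. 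So your plan is right, but expect the ``verification of uniformity'' to be the entire content of the proof rather than a routine check.
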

\begin{proof}
It is known that the equivalence $F_{\omega_n}\!\downarrow\,\leftrightarrow\,\rfn(\isigma_n)$ for fixed $n$ is provable in $\isigma_1$ (and in weaker theories): A model-theoretic proof can be found in \cite{paris80} or \cite[Proposition 6.8]{sommer95}. For a proof-theoretic approach (via iterated reflection principles) we refer to \cite[Theorem 1, Proposition 7.3, Remark 7.4]{beklemishev03}. The author has found no fully explicit argument that the formalization is uniform in $n$. We provide a detailed proof of this fact in \cite{freund-characterize-reflection}: This is a proof-theoretic argument, formalizing the infinitary proof system from \cite{buchholz-wainer-87} by the method of \cite{buchholz91}.
\end{proof}

Using this result and (\ref{eq:slow-rfn-rfn-fragments}) we can view slow reflection as a statement about the fast-growing hierarchy of functions:

\begin{corollary}\label{cor:slow-reflection-connect-fast-growing}
 We have
\begin{equation*}
 \isigma_1\vdash\drfn\,\leftrightarrow\,\forall_x(\feps(x)\!\downarrow\,\rightarrow F_{\omega_{x+1}}\!\downarrow).
\end{equation*}
\end{corollary}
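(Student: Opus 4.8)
The plan is to combine the characterization (\ref{eq:slow-rfn-rfn-fragments}) of slow uniform $\Sigma_1$-reflection in terms of reflection over the fragments $\isigma_{x+1}$ with the \emph{uniform} equivalence between such reflection and the totality of the functions $F_{\omega_{x+1}}$ supplied by Proposition \ref{prop:reflection-fragments-fast-growing}. No fresh induction or model theory is needed: the corollary is essentially a substitution followed by propositional reasoning carried out inside $\isigma_1$.

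First I would recall from (\ref{eq:slow-rfn-rfn-fragments}) that $\isigma_1$ proves $\drfn\leftrightarrow\forall_x(\feps(x)\!\downarrow\,\rightarrow\rfn(\isigma_{x+1}))$, so that it remains to establish
\[ \isigma_1\vdash\forall_x(\feps(x)\!\downarrow\,\rightarrow\rfn(\isigma_{x+1}))\,\leftrightarrow\,\forall_x(\feps(x)\!\downarrow\,\rightarrow F_{\omega_{x+1}}\!\downarrow). \]
Proposition \ref{prop:reflection-fragments-fast-growing} gives $\isigma_1\vdash\forall_x(F_{\omega_x}\!\downarrow\,\leftrightarrow\,\rfn(\isigma_x))$; since $\isigma_1$ proves this universally quantified statement it also proves the instance obtained by substituting the term $x+1$ for the bound variable, i.e.\ $\isigma_1\vdash\forall_x(F_{\omega_{x+1}}\!\downarrow\,\leftrightarrow\,\rfn(\isigma_{x+1}))$. (Here one bears in mind that $F_{\omega_{x+1}}\!\downarrow$ abbreviates the $\Pi_2$-totality statement for $F_{\omega_{x+1}}$, with the ordinal notation $\omega_{x+1}$ treated via the elementary arithmetic of \cite{sommer95} recalled above and the relevant graph formula $\Delta_0$ in $\isigma_1$; all of this has already been set up.) Reasoning inside $\isigma_1$ under the quantifier — fixing $x$ with $\feps(x)\!\downarrow$ and applying the displayed equivalence in both directions — one reads off that the two universally quantified implications above are $\isigma_1$-provably equivalent, and chaining with (\ref{eq:slow-rfn-rfn-fragments}) yields the corollary.

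The step that demands the most care — though it is a bookkeeping matter rather than a genuine obstacle — is the appeal to the uniformity of the equivalence $F_{\omega_x}\!\downarrow\leftrightarrow\rfn(\isigma_x)$ in the parameter $x$: it is precisely because Proposition \ref{prop:reflection-fragments-fast-growing} is formulated with $x$ free (drawing on \cite{freund-characterize-reflection}) that the substitution of $x+1$ is licit within $\isigma_1$. Were only the schematic, instance-by-instance version of that equivalence available, the present corollary would not go through, and the link between slow reflection and a single ``slow'' variant of $\feps$ exploited throughout Section \ref{sect:provably-total-slow-reflection} would be lost.
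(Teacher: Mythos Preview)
Your proof is correct and matches the paper's approach exactly: the corollary is stated without proof, as an immediate consequence of combining (\ref{eq:slow-rfn-rfn-fragments}) with Proposition \ref{prop:reflection-fragments-fast-growing} (instantiated at $x+1$). Your additional remarks about the need for uniformity in $x$ are accurate and well placed, though the paper leaves them implicit.
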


Note that the ``index shift'', stemming from the definition of slow proof, is indeed optimal: In view of $\feps(x)\simeq F_{\omega_{x+1}}(x)$ we can deduce
\begin{equation*}
 \isigma_1\vdash\forall_x(\feps(x)\!\downarrow\,\rightarrow F_{\omega_{x+2}}\!\downarrow)\,\rightarrow\,\forall_y\feps(y)\!\downarrow
\end{equation*}
by induction on $y$. Thus a stronger slow reflection statement would collapse into the usual notion of $\Sigma_1$-reflection over Peano Arithmetic. This explains why our bounds on proof size are relatively sharp.\\
Our next goal is to transform the $\Pi_2$-statement $\forall_x(\feps(x)\!\downarrow\,\rightarrow F_{\omega_{x+1}}\!\downarrow)$ into a formula which defines a unary function.

\begin{definition}
The inverse $\feps^{-1}$ of the function $\feps$ (see \cite[Definition 3.2]{rathjen13}) is given by
\begin{equation*}
\feps^{-1}(x):=\max(\{z\leq x\, |\, \exists_{w\leq x}\feps(z)=w\}\cup\{0\}).
\end{equation*}
Note that the $\Delta_0$-definition of $\feps$ yields a $\Delta_0$-definition of $\feps^{-1}$. To define a slow variant $\feps^\diamond$ of the function $\feps$ we set
\begin{equation*}
\feps^\diamond(x):=F_{\omega_{\feps^{-1}(x)+1}}(x),
\end{equation*}
which has the $\Sigma_1$-definition
\begin{equation*}
\feps^\diamond(x)=y\quad\Leftrightarrow\quad\exists_z(z=\feps^{-1}(x)\land\exists_\alpha(\alpha=\omega_{z+1}\land F_\alpha(x)=y)).
\end{equation*}
Clearly, $z$ is bounded by $x$. In view of (\ref{eq:feps-delta-zero}) the code of $\alpha$ is bounded by a polynomial in $x$. Thus the given definition of $\feps^\diamond$ is $\Delta_0$ in $\isigma_1$.
\end{definition}

We remark that the idea behind $\feps^\diamond$ is similar to Simmons' slow variant of the Ackermann function in \cite[Paragraph 2]{simmons10}. Let us now connect $\feps^\diamond$ with the slow reflection principle:

\begin{proposition}\label{prop:slow-reflection-unary-function}
 We have
\begin{equation*}
 \isigma_1\vdash\drfn\leftrightarrow\feps^\diamond\!\downarrow.
\end{equation*}
\end{proposition}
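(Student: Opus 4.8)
The plan is to prove the equivalence $\isigma_1\vdash\drfn\leftrightarrow\feps^\diamond\!\downarrow$ by combining Corollary~\ref{cor:slow-reflection-connect-fast-growing} with the definition of $\feps^\diamond$. By that corollary it suffices to show, provably in $\isigma_1$, that the $\Pi_2$-statement $\forall_x(\feps(x)\!\downarrow\,\rightarrow F_{\omega_{x+1}}\!\downarrow)$ is equivalent to the $\Pi_2$-statement $\forall_x\,\feps^\diamond(x)\!\downarrow$, i.e.\ $\forall_x\,F_{\omega_{\feps^{-1}(x)+1}}(x)\!\downarrow$. So the whole argument reduces to a bit of bookkeeping about the inverse function $\feps^{-1}$ and monotonicity of the fast-growing hierarchy.

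For the direction from left to right, I would argue inside $\isigma_1$: assume $\forall_x(\feps(x)\!\downarrow\,\rightarrow F_{\omega_{x+1}}\!\downarrow)$ and fix an arbitrary $x$; I need $F_{\omega_{\feps^{-1}(x)+1}}(x)\!\downarrow$. Set $z:=\feps^{-1}(x)$. By the definition of $\feps^{-1}$ we have $\feps(z)\!\downarrow$ (this uses that $0$ is always in the defining set, so $z$ is well-defined, and that if $z>0$ then by definition some $w\le x$ witnesses $\feps(z)=w$; the case $z=0$ needs $\feps(0)\!\downarrow$, which holds because $\feps(0)$ is a fixed small number — or one simply notes $\feps^{-1}(x)=0$ forces $x$ small enough that $F_{\omega_1}(x)=F_\omega(x)$ is visibly defined, but cleanly it is easiest to observe $\feps(0)\!\downarrow$ outright). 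From $\feps(z)\!\downarrow$ and the hypothesis we get $F_{\omega_{z+1}}\!\downarrow$, i.e.\ $\forall_u\,F_{\omega_{z+1}}(u)\!\downarrow$, and in particular $F_{\omega_{z+1}}(x)\!\downarrow$, which is exactly $\feps^\diamond(x)\!\downarrow$. Since $x$ was arbitrary, $\feps^\diamond\!\downarrow$.

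For the direction from right to left, assume $\feps^\diamond\!\downarrow$ inside $\isigma_1$, fix $x$ with $\feps(x)\!\downarrow$, and aim at $F_{\omega_{x+1}}\!\downarrow$. From $\feps(x)\!\downarrow$, say $\feps(x)=w$, we have $x\le w$ by Solovay's bound (or simply $\feps(x)\ge x$, available in $\isigma_1$ via \cite[Proposition 2.5]{solovay81}), hence $\feps^{-1}(w)\ge x$; combined with the trivial $\feps^{-1}(w)\le w$ this pins the value, but all I actually need is $\feps^{-1}(w)\ge x$. Now $\feps^\diamond\!\downarrow$ gives $F_{\omega_{\feps^{-1}(w)+1}}(w)\!\downarrow$, and since $\feps^{-1}(w)\ge x$ we have $\omega_{\feps^{-1}(w)+1}\ge\omega_{x+1}$; by the standard monotonicity of the fast-growing hierarchy in the ordinal index — available in $\isigma_1$ from Sommer's development, e.g.\ $\alpha\le\beta$ and $F_\beta(u)\!\downarrow$ imply $F_\alpha(u)\!\downarrow$ together with $F_\alpha(u)\le F_\beta(u)$ — it follows that $F_{\omega_{x+1}}(w)\!\downarrow$. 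To get totality of $F_{\omega_{x+1}}$ as a function (i.e.\ $\forall_u\,F_{\omega_{x+1}}(u)\!\downarrow$), one more monotonicity fact is used: if $F_\alpha(w)\!\downarrow$ with $w\ge u$ then $F_\alpha(u)\!\downarrow$; since $w=\feps(x)\ge x$ and in fact $w$ is as large as we like relative to any fixed argument once $x$ is, the cleanest formulation is: for each $u$, pick $x'\ge\max(x,u)$ if needed — but actually we only have the single $x$ at hand. The clean move here is to note that totality of $F_{\omega_{x+1}}$ on all inputs follows from definedness at a single sufficiently large input by the hierarchy's monotonicity in the argument, and $\feps(x)\ge x$ suffices for the input $x$; for inputs below $x$ use downward monotonicity in the argument, and for inputs above one re-runs the argument with a larger instance of $\feps(y)\!\downarrow$, which is available because $\feps^\diamond\!\downarrow$ in particular reproves $\feps(y)\!\downarrow$ for all $y$ — indeed $\feps^\diamond(y)\ge\feps(y)$ when $\feps^{-1}(\cdot)$ is large, but the safe statement is simply that $\feps^\diamond\!\downarrow$ implies $\forall_y\,\feps(y)\!\downarrow$ is \emph{not} claimed; rather we invoke $\rfn(\isigma_x)\leftrightarrow F_{\omega_x}\!\downarrow$ from Proposition~\ref{prop:reflection-fragments-fast-growing} to pass back to the reflection form, where "$F_{\omega_{x+1}}\!\downarrow$" already abbreviates totality. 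The main obstacle, then, is precisely this: making sure that "$F_{\omega_{x+1}}\!\downarrow$" in Corollary~\ref{cor:slow-reflection-connect-fast-growing} is read as totality of the function and that $\isigma_1$ really derives totality of $F_\alpha$ from definedness of $F_\beta$ at one large point when $\alpha\le\beta$ — this is the one place where I would cite Sommer's \cite[Theorem 5.3]{sommer95} (the $\isigma_1$-provable defining equations) and the elementary monotonicity lemmas for the hierarchy, rather than reprove them. Everything else is routine manipulation of the $\Delta_0$-definitions of $\feps$, $\feps^{-1}$, and $\feps^\diamond$ together with the bound $\feps(x)\ge x$.
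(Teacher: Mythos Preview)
Your forward direction is essentially the paper's argument; the paper handles the boundary case more cleanly by treating the finitely many $x<\feps(0)$ via $\Sigma_1$-completeness, but the content is the same.

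The backward direction has a genuine gap, which you yourself flag but do not close. You evaluate $\feps^\diamond$ only at the single argument $w:=\feps(x)$; this yields $F_{\omega_{x+1}}(w)\!\downarrow$ and hence, by downward monotonicity in the argument, $F_{\omega_{x+1}}(u)\!\downarrow$ for all $u\le w$. It does \emph{not} give $F_{\omega_{x+1}}(u)\!\downarrow$ for $u>w$. Your proposed escape routes fail: you correctly retract the suggestion that $\feps^\diamond\!\downarrow$ yields $\feps(y)\!\downarrow$ for larger $y$ (indeed the whole point of the section is that it does not), and the principle you finally fall back on --- ``$\isigma_1$ derives totality of $F_\alpha$ from definedness of $F_\beta$ at one large point when $\alpha\le\beta$'' --- is simply false as stated: a single value of $F_\beta$ cannot bound unboundedly many values of $F_\alpha$.

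The missing idea, which is exactly what the paper does, is to apply the hypothesis $\feps^\diamond\!\downarrow$ not only at $w$ but at \emph{every} $y\ge\feps(x)$. For any such $y$ we have $x\le y$ and $\feps(x)\le y$, so $x$ belongs to the set $\{z\le y\,:\,\exists_{w\le y}\,\feps(z)=w\}$ and hence $\feps^{-1}(y)\ge x$. Then $\feps^\diamond(y)=F_{\omega_{\feps^{-1}(y)+1}}(y)\!\downarrow$ together with monotonicity in the ordinal index gives $F_{\omega_{x+1}}(y)\!\downarrow$. Thus $F_{\omega_{x+1}}(y)$ is defined for all $y\ge\feps(x)$, and downward monotonicity in the argument (\cite[Lemma~2.3]{rathjen13}) now yields totality. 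In short: the inequality $\feps^{-1}(y)\ge x$ holds not just at the single point $y=\feps(x)$ but for all larger $y$, and this is what lets the input vary.
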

\begin{proof}
By Corollary \ref{cor:slow-reflection-connect-fast-growing} the claim of the proposition is equivalent to
\begin{equation*}
 \isigma_1\vdash\forall_x(\feps(x)\!\downarrow\,\rightarrow F_{\omega_{x+1}}\!\downarrow)\leftrightarrow\feps^\diamond\!\downarrow.
\end{equation*}
To show the direction ``$\rightarrow$'' we work in $\isigma_1$ and assume that the formula $\forall_x(\feps(x)\!\downarrow\,\rightarrow F_{\omega_{x+1}}\!\downarrow)$ holds. We have to prove $\feps^\diamond(x)\!\downarrow$ for an arbitrary $x$. The finitely many $x<\feps(0)$ are treated by $\Sigma_1$-completeness. For $x\geq\feps(0)$ the set $\{z\leq x\,|\,\exists_{w\leq x}\feps(z)=w\}$ is non-empty, so $\feps^{-1}(x)=:z$ is an element of this set. In particular it follows that $\feps(z)$ is defined. Then the assumption $\forall_x(\feps(x)\!\downarrow\,\rightarrow F_{\omega_{x+1}}\!\downarrow)$ tells us that $F_{\omega_{z+1}}$ is total. Thus $F_{\omega_{z+1}}(x)$ is defined, as required for $\feps^\diamond(x)\!\downarrow$.\\
For the direction ``$\leftarrow$'', assume that the function $\feps^\diamond$ is total, let $x$ be arbitrary, and assume that $\feps(x)$ is defined. We have to prove that $F_{\omega_{x+1}}$ is total. By \cite[Lemma 2.3]{rathjen13} it suffices to show that $F_{\omega_{x+1}}(y)$ is defined for arbitrarily large $y$. Since $\feps(x)$ was assumed to be defined, we may consider an arbitrary $y$ above this value. Then we have $x\leq\feps^{-1}(y)=:z$. Invoking the totality of $\feps^\diamond$ we learn that $\feps^\diamond(y)=F_{\omega_{z+1}}(y)$ is defined. It follows by \cite[Lemma 2.4, Proposition 2.12, Lemma 2.3]{rathjen13} that $F_{\omega_{x+1}}(y)$ is defined (and has value at most $\feps^\diamond(y)$).
\end{proof}

By the parenthesis at the end of the proof, the function $\feps^\diamond$ dominates $F_{\omega_{x+1}}$ for values above $\feps(x)$. In other words, $\feps^\diamond$ eventually dominates any provably total function of Peano Arithmetic. In particular we have
\begin{equation*}
 \pa\nvdash\drfn.
\end{equation*}
Since slow reflection implies slow consistency this was already known by \cite[Proposition 3.3]{rathjen13}. It is important that the argument we just gave does not formalize in Peano Arithmetic: To show that $\feps^\diamond$ dominates $F_{\omega_{x+1}}$ we had to know that $\feps(x)$ is defined. If this was different then $\feps^\diamond\!\downarrow$ would imply $\feps\!\downarrow$, contradicting the result that we are about to prove.\\
Recall that our goal is to bound the provably total functions of the theory $\pa+\drfn$, or equivalently those of $\pa+\feps^\diamond\!\downarrow$. It is a classical result that any provably total function of Peano Arithmetic is dominated by some function $F_\alpha$ with $\alpha<\varepsilon_0$ from the fast-growing hierarchy. To analyse $\pa+\drfn$ we build an analogous hierarchy on top of $\feps^\diamond$:

\begin{definition}
 By induction on $\alpha<\varepsilon_0$ we define functions $F_{\varepsilon_0+\alpha}^\diamond$: Set
\begin{align*}
 F_{\varepsilon_0+0}^\diamond(n) & := F_{\varepsilon_0}^\diamond(n),\\
 F_{\varepsilon_0+\alpha+1}^\diamond(n) & := (F_{\varepsilon_0+\alpha}^\diamond)^{n+1}(n),\\
 F_{\varepsilon_0+\alpha}^\diamond(n) & := F_{\varepsilon_0+\{\alpha\}(n)}^\diamond(n)\quad\text{for $\alpha$ limit},
\end{align*}
where the superscript $n+1$ denotes the number of iterations and $\{\alpha\}(n)$ refers to the fundamental sequence of $\alpha$, as defined at the beginning of Section 2.
\end{definition}

To make use of this hierarchy we will need some monotonicity properties. These will involve the ``step down"-relation from \cite[Section 2]{solovay81} (with slightly different fundamental sequences) or \cite[Section 2]{rathjen13}: We write $\beta\rightarrow_n\gamma$ to express that there is a sequence $\langle\delta_0,\dots ,\delta_k\rangle$ of ordinals with $\delta_0=\beta$, $\delta_k=\gamma$ and $\{\delta_i\}(n)=\delta_{i+1}$ for all $i<k$. The following properties are familiar from the usual fast-growing hierarchy:

\begin{lemma}\label{lem:fepsstar-hierarchy-basic}
For all numbers $m,n$ and ordinals $\alpha,\beta<\varepsilon_0$ the following holds:
\begin{enumerate}[label=(\roman*)]
\item We have $n\leq n^2<F_{\varepsilon_0+\alpha}^\diamond(n)$.
\item If $m\leq n$ then $F_{\varepsilon_0+\alpha}^\diamond(m)\leq F_{\varepsilon_0+\alpha}^\diamond(n)$.
\item If $\alpha\rightarrow_n\beta$ then $F_{\varepsilon_0+\beta}^\diamond(n)\leq F_{\varepsilon_0+\alpha}^\diamond(n)$.
\end{enumerate}
\end{lemma}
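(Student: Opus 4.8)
The plan is to prove the three items essentially in the order stated, proceeding by induction on $\alpha$ in each case, and borrowing the corresponding facts for the ordinary fast-growing hierarchy (which are available in $\isigma_1$ via \cite{sommer95}, or can be quoted from \cite{solovay81,rathjen13}) to handle the base case $\alpha=0$, where $F_{\varepsilon_0+0}^\diamond=F_{\varepsilon_0}^\diamond=\feps^\diamond$ and $\feps^\diamond(n)=F_{\omega_{\feps^{-1}(n)+1}}(n)$. For (i), I would first record that $\feps^\diamond(n)\geq n^2$: since $F_{\omega_{z+1}}(n)\geq F_\omega(n)\geq F_2(n)=2^{n+1}(n+1)-1\geq n^2$ for all $n$ (the small values of $n$ being checked by hand, as in the bounds quoted from \cite[Lemma 2.3, Proposition 2.12]{rathjen13} just before (\ref{eq:feps-delta-zero})), the claim holds at $\alpha=0$. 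For the successor step, $F_{\varepsilon_0+\alpha+1}^\diamond(n)=(F_{\varepsilon_0+\alpha}^\diamond)^{n+1}(n)\geq F_{\varepsilon_0+\alpha}^\diamond(n)>n^2$ by the induction hypothesis and one application of the inner function; for the limit step, $F_{\varepsilon_0+\lambda}^\diamond(n)=F_{\varepsilon_0+\{\lambda\}(n)}^\diamond(n)>n^2$ directly by the induction hypothesis applied to $\{\lambda\}(n)<\lambda$.

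For (ii), monotonicity in the argument, the base case is the statement that $\feps^\diamond$ is nondecreasing: if $m\leq n$ then $\feps^{-1}(m)\leq\feps^{-1}(n)=:z'$, and $F_{\omega_{\feps^{-1}(m)+1}}(m)\leq F_{\omega_{z'+1}}(m)\leq F_{\omega_{z'+1}}(n)$, using that $F_\beta$ is nondecreasing in its argument and nondecreasing along the index in the relevant range (both standard for the ordinary hierarchy; the index monotonicity $F_{\omega_{z+1}}(m)\leq F_{\omega_{z'+1}}(m)$ follows from $\omega_{z+1}\to_m\cdots$ style step-down facts, or simply from $\feps^\diamond$ being nondecreasing as recorded in \cite{rathjen13}). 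The successor and limit steps are then the usual bookkeeping: for $\alpha+1$, iterating a nondecreasing function $m+1\leq n+1$ times starting from $m\leq n$ yields $(F_{\varepsilon_0+\alpha}^\diamond)^{m+1}(m)\leq (F_{\varepsilon_0+\alpha}^\diamond)^{n+1}(n)$ using the induction hypothesis plus (i) to absorb the extra iterations; for limits, $F_{\varepsilon_0+\{\lambda\}(m)}^\diamond(m)\leq F_{\varepsilon_0+\{\lambda\}(m)}^\diamond(n)\leq F_{\varepsilon_0+\{\lambda\}(n)}^\diamond(n)$, where the first inequality is the induction hypothesis in the argument and the second is item (iii) together with the fact that $\{\lambda\}(m)\to_n\{\lambda\}(n)$ when $m\leq n$ (a property of the fundamental-sequence/step-down apparatus).

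For (iii), it suffices by transitivity of $\to_n$ to treat a single step, i.e.\ to show $F_{\varepsilon_0+\{\alpha\}(n)}^\diamond(n)\leq F_{\varepsilon_0+\alpha}^\diamond(n)$ for $\alpha>0$; this is immediate from the definition when $\alpha$ is a limit, and when $\alpha=\beta+1$ it reads $F_{\varepsilon_0+\beta}^\diamond(n)\leq (F_{\varepsilon_0+\beta}^\diamond)^{n+1}(n)$, which follows from (i) by dropping $n$ iterations of a function that is $\geq$ the identity on the relevant inputs (again using (i) to see each iterate stays in range where the inner function does not decrease things). The main obstacle I anticipate is not any single step but rather pinning down exactly which monotonicity/step-down facts about the \emph{ordinary} hierarchy $F_\beta$ (and about $\feps^{-1}$ and the fundamental sequences) one is entitled to cite versus reprove — in particular the index-monotonicity of $F_{\omega_{z+1}}$ in $z$, which underlies the base case of (ii), and the compatibility $m\leq n\Rightarrow\{\lambda\}(m)\to_n\{\lambda\}(n)$ of fundamental sequences with $\to_n$, which is needed in the limit case of (ii). Once those are isolated (they are standard, cf.\ \cite[Section 2]{solovay81} and \cite[Section 2]{rathjen13}), the three inductions are routine.
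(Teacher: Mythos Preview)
Your approach is essentially the paper's: prove (i) by induction on $\alpha$, handling the base case via the known growth estimates for the ordinary hierarchy (the paper cites \cite[Proposition 5.4]{sommer95} for $n^2<F_{\omega_{\feps^{-1}(n)+1}}(n)$), and then establish (ii) and (iii) inductively using (i). The only organizational difference is that the paper runs (ii) and (iii) as a \emph{simultaneous} induction on $\alpha$, because the limit case of (ii) needs (iii) at the smaller ordinal $\{\lambda\}(n)$; your separate treatment also works, but since your proof of (ii) invokes (iii), you should prove (iii) \emph{before} (ii), not after.

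There is one genuine slip you should fix. You twice write the step-down relation in the wrong direction: recall that $\beta\to_n\gamma$ means one can \emph{descend} from $\beta$ to $\gamma$ along $n$-th fundamental sequences, so in particular $\gamma\leq\beta$. Since $\{\lambda\}(m)<\{\lambda\}(n)$ for $m<n$, your stated ``$\{\lambda\}(m)\to_n\{\lambda\}(n)$'' is false. What is true (and what the paper uses, citing \cite[Proposition 2.12]{rathjen13}) is $\{\lambda\}(n)\to_m\{\lambda\}(m)$. The paper then applies the induction hypothesis of (iii) at argument $m$ to get $F_{\varepsilon_0+\{\lambda\}(m)}^\diamond(m)\leq F_{\varepsilon_0+\{\lambda\}(n)}^\diamond(m)$, followed by the induction hypothesis of (ii) to pass from argument $m$ to $n$. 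If you prefer your order (first change the argument, then the ordinal), you need the additional fact that $\to_m\subseteq\to_n$ for $m\leq n$ to upgrade $\{\lambda\}(n)\to_m\{\lambda\}(m)$ to $\{\lambda\}(n)\to_n\{\lambda\}(m)$ before invoking (iii) at argument $n$. Either route is fine once the direction is corrected.
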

\begin{proof}
We repeat the well-known proof for the usual fast-growing hierarchy (see \cite[Proposition 2.5]{solovay81}), with minor modifications in the base case: Claim (i) is shown by induction on $\alpha$. For $\alpha=0$ we have
\begin{equation*}
n^2<F_{\omega_{\feps^{-1}(n)+1}}(n)=\feps^\diamond(n)
\end{equation*}
by \cite[Proposition 5.4]{sommer95}. Successor and limit case are easy. Claims (ii) and (iii) are shown by a simultaneous induction on $\alpha$. Concerning $\alpha=0$ it is easy to see that $m\leq n$ implies $\feps^{-1}(m)\leq\feps^{-1}(n)$. Then
\begin{equation*}
\feps^\diamond(m)=F_{\omega_{\feps^{-1}(m)+1}}(m)\leq F_{\omega_{\feps^{-1}(n)+1}}(m)\leq F_{\omega_{\feps^{-1}(n)+1}}(n)=\feps^\diamond(n)
\end{equation*}
follows by \cite[Lemma 2.3, Proposition 2.12]{rathjen13}. Claim (iii) is trivial for $\alpha=0$. In case $\alpha=\gamma+1$ claim (ii) holds by
\begin{multline*}
F_{\varepsilon_0+\alpha}^\diamond(m)=(F_{\varepsilon_0+\gamma}^\diamond)^{m+1}(m)\leq (F_{\varepsilon_0+\gamma}^\diamond)^{m+1}(n)\leq\\
\leq (F_{\varepsilon_0+\gamma}^\diamond)^{n+1}(n)=F_{\varepsilon_0+\alpha}^\diamond(n),
\end{multline*}
due to the induction hypothesis and claim (i). Concerning (iii) note that $\{\alpha\}(n)=\gamma$ forces $\beta=\gamma$ or $\gamma\rightarrow_n\beta$. Thus
\begin{equation*}
F_{\varepsilon_0+\beta}^\diamond(n)\leq F_{\varepsilon_0+\gamma}^\diamond(n)\leq (F_{\varepsilon_0+\gamma}^\diamond)^{n+1}(n)=F_{\varepsilon_0+\alpha}^\diamond(n)
\end{equation*}
follows by the induction hypothesis and claim (i). Let us come to (ii) for a limit ordinal $\alpha$: By \cite[Proposition 2.12]{rathjen13} we have $\{\alpha\}(n)\rightarrow_m \{\alpha\}(m)$. Then
\begin{equation*}
F_{\varepsilon_0+\alpha}^\diamond(m)= F_{\varepsilon_0+\{\alpha\}(m)}^\diamond(m)\leq F_{\varepsilon_0+\{\alpha\}(n)}^\diamond(m)\leq F_{\varepsilon_0+\{\alpha\}(n)}^\diamond(n)=F_{\varepsilon_0+\alpha}^\diamond(n)
\end{equation*}
uses the induction hypothesis of both (iii) and (ii). As for (iii), note that $\alpha\rightarrow_n\beta$ implies $\beta=\{\alpha\}(n)$ or $\{\alpha\}(n)\rightarrow_n\beta$. Thus
\begin{equation*}
F_{\varepsilon_0+\beta}^\diamond(n)\leq F_{\varepsilon_0+\{\alpha\}(n)}^\diamond(n)=F_{\varepsilon_0+\alpha}^\diamond(n)
\end{equation*}
follows from the induction hypothesis.
\end{proof}

To approach Theorem \ref{thm:total-functions-slow-reflection-mod} we bound the functions $F_{\varepsilon_0+\alpha}^\diamond$ in terms of the usual fast-growing hierarchy:

\begin{lemma}
Consider numbers $l,m,n$ with $m>0$ and an ordinal $\alpha\leq\omega_m$ which satisfy $(F_{\omega_m+\alpha})^l(n)\leq\feps(m)$. Then we have
\begin{equation*}
(F_{\varepsilon_0+\alpha}^\diamond)^l(n)\leq (F_{\omega_m+\alpha})^l(n).
\end{equation*}
\end{lemma}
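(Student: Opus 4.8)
The plan is to prove the bound $(F_{\varepsilon_0+\alpha}^\diamond)^l(n)\leq (F_{\omega_m+\alpha})^l(n)$ by induction on $\alpha\leq\omega_m$, treating the single iteration $l=1$ first and then deriving arbitrary $l$ from it together with the monotonicity of both hierarchies. The crucial observation is that the hypothesis $(F_{\omega_m+\alpha})^l(n)\leq\feps(m)$ forces $\feps^{-1}(y)\geq m$ for every value $y$ that occurs as an intermediate argument during the computation of $(F_{\varepsilon_0+\alpha}^\diamond)^l(n)$: indeed those intermediate values are bounded above by $(F_{\omega_m+\alpha})^l(n)\leq\feps(m)$ (using Lemma~\ref{lem:fepsstar-hierarchy-basic}(i) plus the definitions, once we know the two hierarchies track each other), and they are bounded below by $n$, while $\feps^{-1}$ is monotone; so if $n\geq\feps(m)$ is false we would at least have $\feps(m)$ sitting among the arguments. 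The intended regime is therefore $n\leq\feps(m)$, where at the base $\alpha=0$ we get $F_{\varepsilon_0}^\diamond(n)=F_{\omega_{\feps^{-1}(n)+1}}(n)$ with $\feps^{-1}(n)+1\leq m$ as soon as $n\leq\feps(m)$, hence $F_{\varepsilon_0}^\diamond(n)\leq F_{\omega_m}(n)=F_{\omega_m+0}(n)$ by the monotonicity of the fast-growing hierarchy in its ordinal index (\cite[Lemma 2.3, Proposition 2.12]{rathjen13}).

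For the inductive step I would split into the successor and limit cases, exactly mirroring the recursion defining both hierarchies. If $\alpha=\gamma+1$, then $F_{\varepsilon_0+\alpha}^\diamond(n)=(F_{\varepsilon_0+\gamma}^\diamond)^{n+1}(n)$ and $F_{\omega_m+\alpha}(n)=(F_{\omega_m+\gamma})^{n+1}(n)$; here I apply the induction hypothesis for $\gamma$ with iteration count $l'=n+1$, which is legitimate precisely because the hypothesis $(F_{\omega_m+\gamma})^{n+1}(n)=F_{\omega_m+\gamma+1}(n)\leq\feps(m)$ follows from the assumed bound $(F_{\omega_m+\alpha})^l(n)\leq\feps(m)$ together with Lemma~\ref{lem:fepsstar-hierarchy-basic}(i)-style lower bounds (any value of a fast-growing function is at least its argument, so a single copy of $F_{\omega_m+\gamma+1}(n)$ is dominated by $l$ iterations). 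If $\alpha$ is a limit, both hierarchies step to the same fundamental-sequence member: $F_{\varepsilon_0+\alpha}^\diamond(n)=F_{\varepsilon_0+\{\alpha\}(n)}^\diamond(n)$ and $F_{\omega_m+\alpha}(n)=F_{\omega_m+\{\alpha\}(n)}(n)$, so the claim for $\alpha$ at argument $n$ reduces immediately to the claim for $\{\alpha\}(n)<\alpha$, and the side hypothesis transfers verbatim. One must only check that $\{\alpha\}(n)$ really is below $\omega_m$ and that the fundamental sequence of $\omega_m+\alpha$ agrees with that of $\alpha$ in the relevant range — both are standard facts about the additive structure of the notation system and the fundamental sequences recalled at the start of Section~\ref{sect:no-short-proofs-ph}.

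Finally I would reassemble the general $l$ from the $l=1$ case: writing $n_0:=n$ and $n_{j+1}:=F_{\omega_m+\alpha}(n_j)$, monotone induction on $j\leq l$ shows $(F_{\varepsilon_0+\alpha}^\diamond)^j(n)\leq n_j$ because each application of $F_{\varepsilon_0+\alpha}^\diamond$ to an argument $\leq n_j$ is, by the $l=1$ case applied at argument $n_j$ (whose hypothesis $F_{\omega_m+\alpha}(n_j)=n_{j+1}\leq n_l\leq\feps(m)$ holds), dominated by $F_{\omega_m+\alpha}(n_j)=n_{j+1}$, and then Lemma~\ref{lem:fepsstar-hierarchy-basic}(ii) together with monotonicity of $F_{\omega_m+\alpha}$ absorbs the smaller argument. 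I expect the main obstacle to be purely bookkeeping: making sure the side condition $(F_{\omega_m+\beta})^{l'}(n')\leq\feps(m)$ is genuinely available at every recursive call — it is, because every argument and every iteration count arising in the recursion only makes the left-hand side smaller than the original $(F_{\omega_m+\alpha})^l(n)$, by the universal lower bound $F(x)\geq x$ for fast-growing functions and by Lemma~\ref{lem:fepsstar-hierarchy-basic}(iii)/its fast-growing analogue for the ordinal descent. Once that invariant is phrased cleanly (ideally by strengthening the statement so that $n'$ ranges over all values $\leq\feps(m)$ rather than a fixed $n$), the induction goes through mechanically.
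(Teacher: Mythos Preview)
Your approach is essentially the paper's: transfinite induction on $\alpha$ with a subsidiary argument on $l$ (you phrase the latter as ``do $l=1$ then bootstrap via monotonicity'', the paper as a side induction on $l$; these amount to the same thing, and in both versions the successor step at $\alpha=\gamma+1$ invokes the full all-$l$ hypothesis at $\gamma$). Two small slips to clean up: in your first paragraph you write $\feps^{-1}(y)\geq m$ where you mean $\feps^{-1}(y)<m$, and in the base case you need the \emph{strict} inequality $n<\feps(m)$ (which does follow, since $n<F_{\omega_m}(n)\leq\feps(m)$) to conclude $\feps^{-1}(n)+1\leq m$; at $n=\feps(m)$ the conclusion would fail.
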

\begin{proof}
We argue by transfinite induction on $\alpha$ with a side induction on $l$. The base $l=0$ of the side induction amounts to the trivial inequality $n\leq n$. So let us come to the side induction step $l\leadsto l+1$: There we have the assumption $(F_{\omega_m+\alpha})^{l+1}(n)\leq\feps(m)$. Abbreviating $N:=(F_{\varepsilon_0+\alpha}^\diamond)^l(n)$ our task is to show $F_{\varepsilon_0+\alpha}^\diamond(N)\leq (F_{\omega_m+\alpha})^{l+1}(n)$. We will use some well-known monotonicity properties of the fast-growing hierarchy, which can be found in \cite[Section 2]{rathjen13} (or \cite[Section 2]{solovay81}, with slightly different fundamental sequences). For example we have $(F_{\omega_m+\alpha})^l(n)<(F_{\omega_m+\alpha})^{l+1}(n)$, which allows us to apply the side induction hypothesis and obtain
\begin{equation*}
N\leq (F_{\omega_m+\alpha})^l(n)<\feps(m).
\end{equation*}
Now we distinguish the following cases:\\
\emph{Case $\alpha=0$:} Let us first show
\begin{equation}\label{eq:converse-feps-inverse}
\feps^{-1}(N)+1\leq m.
\end{equation}
Aiming at a contradiction, assume that we have $m\leq\feps^{-1}(N)$. Observe that this implies $\feps^{-1}(N)>0$. Invoking the definition of $\feps^{-1}$ we would then get
\begin{equation*}
\feps(m)\leq\feps(\feps^{-1}(N))\leq  N,
\end{equation*}
which contradicts $N<\feps(m)$ from above. Now in view of (\ref{eq:converse-feps-inverse}) we obtain
\begin{equation*}
\feps^\diamond(N)=F_{\omega_{\feps^{-1}(N)+1}}(N)\leq F_{\omega_m}(N)\leq (F_{\omega_m})^{l+1}(n),
\end{equation*}
which is the side induction step in the case $\alpha=0$.\\
\emph{Case $\alpha=\beta+1$:} First observe
\begin{equation*}
(F_{\omega_m+\beta})^{N+1}(N)=F_{\omega_m+\alpha}(N)\leq (F_{\omega_m+\alpha})^{l+1}(n)\leq\feps(m).
\end{equation*}
This allows us to apply the main induction hypothesis with $N,N+1$ and $\beta$ at the places of $n,l$ and $\alpha$, respectively. We get
\begin{multline*}
F_{\varepsilon_0+\alpha}^\diamond(N)=(F_{\varepsilon_0+\beta}^\diamond)^{N+1}(N)\leq(F_{\omega_m+\beta})^{N+1}(N)=\\
=F_{\omega_m+\alpha}(N)\leq (F_{\omega_m+\alpha})^{l+1}(n).
\end{multline*}
\emph{Case $\alpha$ limit:} The condition $\alpha\leq\omega_m$ implies $\omega_m+\{\alpha\}(N)=\{\omega_m+\alpha\}(N)$ (the ordinal $\omega_m$ meshes with $\alpha$, see \cite[Section 2]{rathjen13}). Then we have
\begin{equation*}
F_{\omega_m+\{\alpha\}(N)}(N)=F_{\omega_m+\alpha}(N)\leq (F_{\omega_m+\alpha})^{l+1}(n)\leq\feps(m).
\end{equation*}
Now apply the main induction hypothesis with $N,1$ and $\{\alpha\}(N)$ at the places of $n,l$ and $\alpha$, to get
\begin{multline*}
F_{\varepsilon_0+\alpha}^\diamond(N)=F_{\varepsilon_0+\{\alpha\}(N)}^\diamond(N)\leq F_{\omega_m+\{\alpha\}(N)}(N)=\\
=F_{\omega_m+\alpha}(N)\leq (F_{\omega_m+\alpha})^{l+1}(n).
\end{multline*}
We have thus completed the side induction step in all possible cases.
\end{proof}

From the lemma we can deduce the following result, which could be described as the ``combinatorial half" of Theorem \ref{thm:total-functions-slow-reflection-mod}:

\begin{proposition}\label{prop:modified-hierarchy-dominated-feps}
For each $\alpha<\varepsilon_0$ there is a number $N$ such that we have
\begin{equation*}
F_{\varepsilon_0+\alpha}^\diamond(\feps(n\dotminus 1))\leq\feps(n)\qquad\text{for all $n\geq N$}.
\end{equation*}
In particular $F_{\varepsilon_0+\alpha}^\diamond$ is eventually dominated by $\feps$.
\end{proposition}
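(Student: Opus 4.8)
The plan is to derive the proposition from the preceding lemma, thereby reducing it to a statement about the ordinary fast-growing hierarchy. I would invoke that lemma with $m:=n$, with $l:=1$, and with the number $\feps(n\dotminus 1)$ substituted for the lemma's free variable ``$n$''; since $\varepsilon_0=\sup_k\omega_k$, the side condition $\alpha\le\omega_n$ holds once $n$ exceeds some $m_0$ with $\alpha<\omega_{m_0}$. With these substitutions the lemma's hypothesis reads $F_{\omega_n+\alpha}(\feps(n\dotminus 1))\le\feps(n)$ and its conclusion reads $F_{\varepsilon_0+\alpha}^\diamond(\feps(n\dotminus 1))\le F_{\omega_n+\alpha}(\feps(n\dotminus 1))$. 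So a single inequality does everything: it suffices to show that for each fixed $\alpha<\varepsilon_0$ there is an $N$ with $F_{\omega_n+\alpha}(\feps(n\dotminus 1))\le\feps(n)$ for all $n\ge N$, since this verifies the lemma's hypothesis and, combined with its conclusion, yields the claim.

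To prove this inequality I would use the identities $\feps(n)=F_{\omega_{n+1}}(n)$ and $\feps(n\dotminus 1)=F_{\omega_n}(n\dotminus 1)$ (both immediate from the definition of $\feps$), together with the standard monotonicity facts for the fast-growing hierarchy from \cite[Section 2]{rathjen13}: every $F_\beta$ is strictly increasing with $F_\beta(x)>x$, and $F_\delta(z)\le F_\gamma(z)$ whenever $\gamma\rightarrow_z\delta$. For $n$ large enough that $\alpha\le\omega_n$ and that the step-down relation $\omega_{n+1}\rightarrow_n(\omega_n+\alpha+1)$ holds, I would first note that $\omega_n+\alpha\rightarrow_z\omega_n$ for every $z\ge 1$ (simply descend the summand $\alpha$ to $0$, leaving $\omega_n$ untouched), so that $F_{\omega_n+\alpha}$ dominates $F_{\omega_n}$ pointwise; consequently
\[
F_{\omega_n+\alpha}^{\,n}(n)\ge F_{\omega_n+\alpha}(n)\ge F_{\omega_n}(n)>F_{\omega_n}(n\dotminus 1)=\feps(n\dotminus 1).
\]
Feeding this into one more application of $F_{\omega_n+\alpha}$ gives
\[
F_{\omega_n+\alpha}(\feps(n\dotminus 1))\le F_{\omega_n+\alpha}\bigl(F_{\omega_n+\alpha}^{\,n}(n)\bigr)=F_{\omega_n+\alpha}^{\,n+1}(n)=F_{\omega_n+\alpha+1}(n),
\]
so it only remains to bound $F_{\omega_n+\alpha+1}(n)$ by $F_{\omega_{n+1}}(n)=\feps(n)$, which follows from $\omega_{n+1}\rightarrow_n(\omega_n+\alpha+1)$ and the monotonicity of $F$ along $\rightarrow_n$.

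The main obstacle is exactly this last step-down relation, and in particular making it hold uniformly for all sufficiently large $n$. I expect to establish it by tracing the descent of $\omega_{n+1}=\omega^{\omega_n}$ at index $n$: its first member is $\{\omega_{n+1}\}(n)=\omega_n^{n+1}$, the descent then runs down through the towers $\omega_n^{k}$ to $\omega_n\cdot 2$, and further through the tail $\omega_n+\omega_{n-1}^{n+1}$; one then argues that this tail, descended at index $n$, passes through the fixed ordinal $\alpha+1$ as soon as $n$ exceeds the finitely many coefficients occurring in the Cantor normal form of $\alpha+1$. All of this is routine given the theory of fundamental sequences in \cite[Section 2]{solovay81} and \cite[Section 2]{rathjen13}; the only care needed is to phrase the step-down facts in exactly the form required and to keep the dependence on $n$ under control. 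The concluding assertion of the proposition, that $F_{\varepsilon_0+\alpha}^\diamond$ is eventually dominated by $\feps$, then follows routinely: for $x$ large, choose $n$ with $\feps(n\dotminus 1)\le x<\feps(n)$, and use Lemma \ref{lem:fepsstar-hierarchy-basic}(ii) together with the inequality just proved to get $F_{\varepsilon_0+\alpha}^\diamond(x)\le F_{\varepsilon_0+\alpha}^\diamond(\feps(n))\le\feps(n+1)\le\feps(\feps(n\dotminus 1))\le\feps(x)$.
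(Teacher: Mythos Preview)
Your proposal is correct and follows essentially the same route as the paper: apply the preceding lemma with $m=n$ and $l=1$, reduce everything to the inequality $F_{\omega_n+\alpha}(\feps(n\dotminus 1))\leq\feps(n)$, bound the left side by $F_{\omega_n+\alpha+1}(n)$, and finish via the step-down $\omega_{n+1}\rightarrow_n\omega_n+\alpha+1$. For the concluding ``in particular'' clause the paper takes a shorter path than you do: from $n\leq\feps(n\dotminus 1)$ and monotonicity of $F_{\varepsilon_0+\alpha}^\diamond$ one gets $F_{\varepsilon_0+\alpha}^\diamond(n)\leq F_{\varepsilon_0+\alpha}^\diamond(\feps(n\dotminus 1))\leq\feps(n)$ directly, without your detour through $\feps(n+1)$ and $\feps(\feps(n\dotminus 1))$.
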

\begin{proof}
Consider some $\alpha<\varepsilon_0$. We shall see that the proposition holds for any $N>0$ with $\omega_N\rightarrow_N\alpha+1$. Let us first show that such a number $N$ exists: As a first approximation take some $N_0>0$ with $\alpha<\omega_{N_0}$. From \cite[Lemma 2.6]{solovay81} we get a number $N$ with $\omega_{N_0}\rightarrow_N\alpha+1$, and by \cite[Corollary 2.4]{solovay81} we may assume $N\geq N_0$. By \cite[Proposition 2.12]{rathjen13} we have $\omega_N\rightarrow_N \omega_{N_0}$, and together this implies $\omega_N\rightarrow_N\alpha+1$ as desired. To verify the proposition consider an arbitrary number $n\geq N$. We would like to apply the previous lemma with $n,1$ and $\feps(n-1)$ at the places of $m,l$ and $n$, respectively. To do so we must verify the condition
\begin{equation}\label{eq:comparison-slow-hierarchy-auxiliary}
F_{\omega_n+\alpha}(\feps(n-1))\leq\feps(n)
\end{equation}
of the lemma. Using \cite[Lemma 2.7]{rathjen13} we get $\omega_n+\alpha\rightarrow_n\omega_n$, and then
\begin{multline*}
F_{\omega_n+\alpha}(\feps(n-1))=F_{\omega_n+\alpha}(F_{\omega_n}(n-1))\leq F_{\omega_n+\alpha}(F_{\omega_n}(n))\leq\\
\leq (F_{\omega_n+\alpha})^2(n)\leq (F_{\omega_n+\alpha})^{n+1}(n)=F_{\omega_n+\alpha+1}(n).
\end{multline*}
The next step is to show $\omega_{n+1}\rightarrow_n\omega_n+\alpha+1$: From \cite[Lemma 2.10, 2.13]{rathjen13} we get $\omega_{n+1}\rightarrow_n\omega^{\omega_{n-1}+1}$. In view of $\{\omega^{\omega_{n-1}+1}\}(1)=\omega_n+\omega_n$ we can use \cite[Proposition 2.12]{rathjen13} to obtain $\omega_{n+1}\rightarrow_n\omega_n+\omega_n$. Since $\omega_n$ meshes with $\omega_n$ it only remains to show $\omega_n\rightarrow_n\alpha+1$. This follows from the above $\omega_N\rightarrow_N\alpha+1$ using \cite[Corollary 2.4]{solovay81} and \cite[Proposition 2.12]{rathjen13}. Now we get
\begin{equation*}
F_{\omega_n+\alpha+1}(n)\leq F_{\omega_{n+1}}(n)=\feps(n),
\end{equation*}
which completes the proof of (\ref{eq:comparison-slow-hierarchy-auxiliary}). This allows us to apply the previous lemma, and we finally obtain
\begin{equation*}
F_{\varepsilon_0+\alpha}^\diamond(\feps(n-1))\leq F_{\omega_n+\alpha}(\feps(n-1))\leq\feps(n).
\end{equation*}
To deduce that $F_{\varepsilon_0+\alpha}^\diamond$ is eventually dominated by $\feps$ use $n\leq\feps(n-1)$ and the fact that $F_{\varepsilon_0+\alpha}^\diamond$ is monotone.
\end{proof}

The previous proposition is complemented by the following result:

\begin{proposition}
Any provably total function of $\pa+\drfn$ is eventually dominated by one of the functions $F_{\varepsilon_0+\alpha}^\diamond$ with $\alpha<\varepsilon_0$.
\end{proposition}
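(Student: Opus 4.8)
The plan is to obtain this bound from the classical ordinal analysis of the fragments of Peano Arithmetic, \emph{relativised} so that the fast-growing hierarchy starts at $\feps^\diamond$. Recall (in the indexing compatible with Proposition \ref{prop:reflection-fragments-fast-growing}; see \cite{schwichtenberg71,wainer70}) that every provably total function of $\isigma_k$ is dominated by $F_\alpha$ for some $\alpha<\omega_k$, and that $\sup_k\omega_k=\varepsilon_0$. Since any derivation uses only finitely many induction axioms, a provably total function $g$ of $\pa+\drfn$ is already provably total in $\isigma_k+\drfn$ for some $k$, and by Proposition \ref{prop:slow-reflection-unary-function} this theory is nothing but $\isigma_k+\feps^\diamond\!\downarrow$. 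Writing $\theta$ for a $\Sigma_1$-graph of $g$, we thus have $\isigma_k\vdash\feps^\diamond\!\downarrow\,\rightarrow\,\forall_x\exists_y\,\theta(x,y)$, and it suffices to bound $g$ by $F_{\varepsilon_0+\alpha}^\diamond$ for some $\alpha<\omega_k<\varepsilon_0$; taking the union over all $k$ then produces a single $\alpha<\varepsilon_0$ that works, while the equality $\sup_k\omega_k=\varepsilon_0$ shows that the whole range $\alpha<\varepsilon_0$ is genuinely needed, which is why the hierarchy $F_{\varepsilon_0+\alpha}^\diamond$ was introduced that far.

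The heart of the argument is to redo the ordinal analysis of $\isigma_k$ with the hierarchy based at $\feps^\diamond$ instead of at $x\mapsto x+1$. Concretely, I would embed $\isigma_k+\feps^\diamond\!\downarrow$ into an infinitary Tait-style calculus in the spirit of \cite{buchholz-wainer-87} (formalised as in \cite{buchholz91}), adding the $\Delta_0$-relation $\feps^\diamond(x)=y$ as a further decidable atom; the axiom $\feps^\diamond\!\downarrow$ is then derivable ``for free'' by the $\omega$-rule over this atom, with small ordinal cost, so that effectively the bottom function of the hierarchy becomes $\feps^\diamond$. Cut elimination produces cut-free derivations of ordinal height below $\omega_k$ for all $\isigma_k$-consequences, and the attached bounding lemma extracts, from a cut-free derivation of $\exists_y\,\theta(\overline n,y)$, a witness bounded by $F_{\varepsilon_0+\alpha}^\diamond$ evaluated at (a simple function of) $n$, for some $\alpha<\omega_k$. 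The point is that the relativised hierarchy here is precisely the family $F_{\varepsilon_0+\alpha}^\diamond$ of the definition preceding Lemma \ref{lem:fepsstar-hierarchy-basic}: its bottom function $F_{\varepsilon_0}^\diamond$ equals $\feps^\diamond$, its successor and limit clauses are the usual ones, and the monotonicity statements (i)--(iii) of Lemma \ref{lem:fepsstar-hierarchy-basic} are exactly what the bounding lemma requires. Hence $g$ is eventually dominated by $F_{\varepsilon_0+\alpha}^\diamond$; combined with Proposition \ref{prop:modified-hierarchy-dominated-feps} this also yields Theorem \ref{thm:total-functions-slow-reflection-mod}.

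I expect the main obstacle to be the relativisation in the second paragraph: one must verify that inserting $\feps^\diamond$ as a decidable atom does not disturb the ordinal bookkeeping, so that cut-free derivations of $\isigma_k$-consequences still have height below $\omega_k$, and that the bounding lemma over the enlarged atom set returns exactly the functions $F_{\varepsilon_0+\alpha}^\diamond$ rather than some incomparable relativised hierarchy. A less laborious route, avoiding a redevelopment of the infinitary system, is to invoke a relativised form of the Schwichtenberg--Wainer theorem: for any $\Delta_0$-definable function $h$ that $\isigma_1$ proves to be monotone and to satisfy $n<h(n)$ --- both hold for $h=\feps^\diamond$ by the case $\alpha=0$ of Lemma \ref{lem:fepsstar-hierarchy-basic} --- every provably total function of $\isigma_k+h\!\downarrow$ is dominated by the $\alpha$-th function of the fast-growing hierarchy based at $h$, for some $\alpha<\omega_k$, and for $h=\feps^\diamond$ that hierarchy is $(F_{\varepsilon_0+\alpha}^\diamond)_{\alpha<\varepsilon_0}$.
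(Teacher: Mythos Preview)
Your proposal is correct and follows essentially the same route as the paper: reduce $\drfn$ to $\feps^\diamond\!\downarrow$ via Proposition \ref{prop:slow-reflection-unary-function}, extend the Buchholz--Wainer infinitary system of \cite{buchholz-wainer-87} to accommodate $\feps^\diamond$, and rerun embedding, cut elimination and the bounding lemma using Lemma \ref{lem:fepsstar-hierarchy-basic}. The one point where the paper is more explicit than your sketch is the mechanism you flagged as the main obstacle: rather than merely treating $\feps^\diamond(x)=y$ as a decidable atom (it already is, being elementary), the paper adds a new $\n$-rule $(\dn)$ --- from $\vdash^\alpha\Gamma,n\in\n$ infer $\vdash^{\alpha+1}\Gamma,\feps^\diamond(n)\in\n$ --- and it is precisely this rule that lets the embedding of $\forall_{x\in\n}\exists_{y\in\n}\feps^\diamond(x)=y$ go through with bounded ordinal cost and forces the bounding lemma to return $F_{\varepsilon_0+\alpha}^\diamond$ instead of $F_\alpha$.
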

\begin{proof}
By Proposition \ref{prop:slow-reflection-unary-function} the slow reflection principle $\drfn$ is equivalent to the statement that the function $\feps^\diamond$ is total. It is a classical result that any provably total function of Peano Arithmetic is eventually dominated by one of the functions $F_\alpha$ with $\alpha<\varepsilon_0$ from the fast-growing hierarchy. We need to see that this remains valid when one adds the base function $\feps^\diamond$ (both as an axiom and as initial function of the fast-growing hierarchy). Indeed a general result to this effect is shown as part of the proof of \cite[Theorem 16]{kristiansen12}. However, in \cite{kristiansen12} the approach to the fast-growing hierarchy is somewhat different: The paper works with norms of ordinals rather than explicit fundamental sequences. This appears to be a technicality, but rather than working out a detailed comparison we take the more direct way and reprove \cite[Theorem 16]{kristiansen12} in our setting:\\
As basis for our proof we take the analysis of the provably total functions of Peano Arithmetic in \cite{buchholz-wainer-87}. We assume that the reader has access to this paper. Note that the notation $\beta<_k\alpha$ in \cite{buchholz-wainer-87} refers to the same ``step down"-relation that we write as $\alpha\rightarrow_k\beta$. First of all we need to extend the formalization of Peano Arithmetic in \cite[Section 2]{buchholz-wainer-87} by the axiom $\feps^\diamond\!\downarrow$. To do so, recall that the graph of $\feps^\diamond$ is defined by a $\Delta_0$-formula and is thus elementary. So the formal system of \cite[Section 2]{buchholz-wainer-87} already contains a relation symbol $\feps^\diamond(\cdot)=\cdot$ and defining axioms corresponding to its elementary definition. Using this relation symbol we extend the formal system by the new axiom $\forall_x\exists_y\feps^\diamond(x)=y$. Next, we need to adapt the infinitary proof system of \cite[Section 3]{buchholz-wainer-87}. This system contains a special relation symbol $\cdot\in\n$ which will be interpreted as a finite approximation to the set of natural numbers. The infinitary system contains an axiom which places zero in $\n$ and a rule which allows us to put in successors: 
\begin{equation*}
(\n)\qquad\text{if}\quad\vdash^\alpha\Gamma,n\in\n\quad\text{then}\quad\vdash^{\alpha+1}\Gamma,n+1\in\n.
\end{equation*}
We need to add a new rule which gives access to values of the function $\feps^\diamond$ (it is important that the increase in the ordinal bound is independent of $n$):
\begin{equation*}
(\dn)\qquad\text{if}\quad\vdash^\alpha\Gamma,n\in\n\quad\text{then}\quad\vdash^{\alpha+1}\Gamma,\feps^\diamond(n)\in\n.
\end{equation*}
Using this rule the embedding lemma is easily extended by a proof of the axiom $\forall_x\exists_y\feps^\diamond(x)=y$ in the infinite system: Since the prime formula $\feps^\diamond(n)=\overline{\feps^\diamond(n)}$ is true we get $\vdash^1 n\notin\n,\feps^\diamond(n)=\overline{\feps^\diamond(n)}$ for each $n$. The axiom $\vdash^0 n\notin\n,n\in\n$ and the new rule $(\dn)$ yield $\vdash^1 n\notin\n,\feps^\diamond(n)\in\n$. Introducing a conjunction and an existential quantifier we obtain $\vdash^3 n\notin\n,\exists_{y\in\n}\feps^\diamond(n)=y$. To keep the coefficients in the ordinal bound small we now apply accumulation: In view of $\omega\rightarrow_2 3$ we can conclude $\vdash^\omega n\notin\n,\exists_{y\in\n}\feps^\diamond(n)=y$. By disjunction introduction and the $\omega$-rule we arrive at $\vdash^{\omega+3} \forall_{x\in\n}\exists_{y\in\n}\feps^\diamond(x)=y$. Using accumulation again we get
\begin{equation*}
\vdash^{\omega\cdot 2} \forall_{x\in\n}\exists_{y\in\n}\feps^\diamond(x)=y,
\end{equation*}
precisely as needed for the extended embedding lemma. It is straightforward to check that inversion, reduction and cut-elimination remain valid: In this respect the new rule $(\dn)$ behaves just as the original rule $(\n)$. In the bounding lemma the bound $F_\alpha(k)$ is replaced by $F_{\varepsilon_0+\alpha}^\diamond(k)$:
\begin{equation*}
\parbox{0.85\textwidth}{Assume that we have $\vdash^\alpha n_1\notin\n,\dots ,n_r\notin\n,\Gamma$ with cut rank 0, where $\Gamma$ only contains closed positive $\Sigma_1(\n)$-formulas. Then $\Gamma$ is true in $F_{\varepsilon_0+\alpha}^\diamond(k)$ for $k=\max(\{2\}\cup\{3n_1,\dots ,3n_r\})$.}
\end{equation*}
Recall that positive $\Sigma_1(\n)$-formulas only contain the connectives $\lor,\land,\exists$ and do not contain subformulas of the form $n\notin\n$. A closed sequent is called true in $m$ if the disjunction of its formulas is true under the interpretation $\n=\{n\,|\,3n<m\}$ of the special relation symbol. To prove the bounding lemma one argues by induction on $\alpha$ and distinguishes cases according to the last rule of the deduction $\vdash^\alpha n_1\notin\n,\dots ,n_r\notin\n,\Gamma$. Using Lemma \ref{lem:fepsstar-hierarchy-basic} this is straightforward and essentially as in \cite{buchholz-wainer-87}. Let us only consider the case of a deduction that ends in the new rule $(\dn)$: Then $\Gamma$ contains a formula of the form $\feps^\diamond(n)\in\n$ and we have $\vdash^\beta n_1\notin\n,\dots ,n_r\notin\n,\Gamma,n\in\n$ with $\alpha=\beta+1$. Since the premise of the rule contains no new formula of the form $m\notin\n$ the number $k$ is unchanged and the induction hypothesis tells us that $\Gamma,n\in\n$ is true in $F_{\varepsilon_0+\beta}^\diamond(k)$. There are two possibilities: If $\Gamma$ is true in $F_{\varepsilon_0+\beta}^\diamond(k)$ then it is also true in $F_{\varepsilon_0+\alpha}^\diamond(k)\geq F_{\varepsilon_0+\beta}^\diamond(k)$. Otherwise the formula $n\in\n$ must be true in $F_{\varepsilon_0+\beta}^\diamond(k)$, which means that we have $n\leq 3n<F_{\varepsilon_0+\beta}^\diamond(k)$. Using Lemma \ref{lem:fepsstar-hierarchy-basic} we observe $3\leq (F_{\varepsilon_0+\beta}^\diamond)^2(k)$ and infer
\begin{multline*}
3\cdot\feps^\diamond(n)\leq 3\cdot\feps^\diamond(F_{\varepsilon_0+\beta}^\diamond(k))\leq (F_{\varepsilon_0+\beta}^\diamond)^2(k)\cdot(F_{\varepsilon_0+\beta}^\diamond)^2(k)<\\
<(F_{\varepsilon_0+\beta}^\diamond)^3(k)\leq (F_{\varepsilon_0+\beta}^\diamond)^{k+1}(k)=F_{\varepsilon_0+\alpha}^\diamond(k).
\end{multline*}
This means that $\Gamma$ contains the formula $\feps^\diamond(n)\in\n$ which is true in $F_{\varepsilon_0+\alpha}^\diamond(k)$.\\
Now we can deduce the desired result as usual: Let $g$ be a provably total function of the theory $\pa+\feps^\diamond\!\downarrow$. From the given definition of $g$ we can read off an elementary relation $\chi_g$ such that we have
\begin{gather*}
g(m)=n\qquad\Leftrightarrow\qquad\mathbb N\vDash\exists_z\chi_g(m,n,z),\\
\pa+\feps^\diamond\!\downarrow\,\vdash\,\forall_x\exists_{y,z}\chi_g(x,y,z).
\end{gather*}
By embedding and cut elimination we get an ordinal $\alpha<\varepsilon_0$ and an infinitary deduction $\vdash^\alpha\forall_{x\in N}\exists_{y\in N}\exists_{z\in N}\chi_g(x,y,z)$ of cut rank $0$. Inversion yields a deduction
\begin{equation*}
\vdash^\alpha m\notin\n,\exists_{y\in N}\exists_{z\in N}\chi_g(m,y,z)
\end{equation*}
for each number $m$. Assume $m\geq 3$. By the bounding lemma there are numbers $n,k<F_{\varepsilon_0+\alpha}^\diamond(3m)$ such that $\chi_g(m,n,k)$ is true. Using Lemma \ref{lem:fepsstar-hierarchy-basic} we get
\begin{equation*}
g(m)<F_{\varepsilon_0+\alpha}^\diamond(3m)\leq F_{\varepsilon_0+\alpha}^\diamond(m^2)\leq (F_{\varepsilon_0+\alpha}^\diamond)^2(m)\leq F_{\varepsilon_0+\alpha+1}^\diamond(m),
\end{equation*}
which shows that $g$ is eventually dominated by $F_{\varepsilon_0+\alpha+1}^\diamond$.
\end{proof}

Putting pieces together we can deduce the main result of this section:

\begin{theorem}\label{thm:total-functions-slow-reflection-mod}
 For any provably total function $g$ of $\pa+\drfn$ there is a number $N$ such that we have
\begin{equation*}
 g(\feps(n\dotminus 1))\leq \feps(n)\qquad\text{for all $n\geq N$}.
\end{equation*}
In particular any provably total function of the theory $\pa+\drfn$ is eventually dominated by $\feps$.
\end{theorem}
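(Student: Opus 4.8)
The plan is to simply assemble the two preceding propositions together with the elementary bound $n\leq\feps(n\dotminus 1)$. First I would take an arbitrary provably total function $g$ of $\pa+\drfn$ and replace it by its monotone majorant $m\mapsto\max\{g(0),\dots,g(m)\}$, which is again provably total in $\pa+\drfn$; so without loss of generality $g$ is monotone. The preceding proposition then provides an ordinal $\alpha<\varepsilon_0$ and a threshold $M_0$ with $g(m)\leq F_{\varepsilon_0+\alpha}^\diamond(m)$ for all $m\geq M_0$.

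Next I would invoke Proposition \ref{prop:modified-hierarchy-dominated-feps} for this particular $\alpha$, obtaining a number $N_0$ such that $F_{\varepsilon_0+\alpha}^\diamond(\feps(n\dotminus 1))\leq\feps(n)$ for all $n\geq N_0$. Since $n\leq\feps(n\dotminus 1)$ by \cite[Proposition 2.5]{solovay81} and $\feps$ is strictly increasing, there is a further threshold $N_1$ beyond which $\feps(n\dotminus 1)\geq M_0$. Putting $N:=\max\{N_0,N_1\}$ and chaining the inequalities yields, for every $n\geq N$,
\begin{equation*}
 g(\feps(n\dotminus 1))\leq F_{\varepsilon_0+\alpha}^\diamond(\feps(n\dotminus 1))\leq\feps(n),
\end{equation*}
which is the displayed claim. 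For the ``in particular'' statement I would use $n\leq\feps(n\dotminus 1)$ once more, now together with the monotonicity of $g$, to conclude $g(n)\leq g(\feps(n\dotminus 1))\leq\feps(n)$ for all $n\geq N$.

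I do not expect a genuine obstacle at this stage: all the substance lies in Proposition \ref{prop:modified-hierarchy-dominated-feps} (the combinatorial comparison of the auxiliary hierarchy $F_{\varepsilon_0+\alpha}^\diamond$ with $\feps$) and in the preceding proposition (that every provably total function of $\pa+\drfn$ is captured by that hierarchy, established by adjoining $\feps^\diamond\!\downarrow$ as an axiom and the rule $(\dn)$ to the infinitary calculus of \cite{buchholz-wainer-87} and rerunning the embedding, cut-elimination and bounding arguments). The only points needing a little care in this final step are the reduction to a monotone $g$ and the bookkeeping of the thresholds, so that the argument $\feps(n\dotminus 1)$ always lies above the domination bound $M_0$.
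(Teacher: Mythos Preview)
Your proposal is correct and follows essentially the same route as the paper: combine the preceding proposition with Proposition~\ref{prop:modified-hierarchy-dominated-feps} via the bound $n\leq\feps(n\dotminus 1)$. The only cosmetic difference is that you pass to a monotone majorant of $g$ at the outset, whereas the paper leaves $g$ untouched and instead uses the monotonicity of $F_{\varepsilon_0+\alpha}^\diamond$ (Lemma~\ref{lem:fepsstar-hierarchy-basic}(ii)) for the ``in particular'' clause; this also makes your separate threshold $N_1$ unnecessary, since $\feps(n\dotminus 1)\geq n\geq N$ already places the argument above the domination bound.
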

\begin{proof}
Consider a function $g$ which is provably total in $\pa+\drfn$. The previous proposition provides an ordinal $\alpha<\varepsilon_0$ and a bound $N$ such that we have
\begin{equation*}
g(m)\leq F_{\varepsilon_0+\alpha}^\diamond(m)\quad\text{for all $m\geq N$}.
\end{equation*}
Increasing $N$ if necessary Proposition \ref{prop:modified-hierarchy-dominated-feps} yields
\begin{equation*}
F_{\varepsilon_0+\alpha}^\diamond(\feps(n\dotminus 1))\leq\feps(n)\quad\text{for all $n\geq N$}.
\end{equation*}
For $n\geq N$ we have $\feps(n\dotminus 1)\geq n\geq N$ and thus
\begin{equation*}
g(\feps(n\dotminus 1))\leq F_{\varepsilon_0+\alpha}^\diamond(\feps(n\dotminus 1))\leq\feps(n)
\end{equation*}
and
\begin{equation*}
g(n)\leq F_{\varepsilon_0+\alpha}^\diamond(n)\leq F_{\varepsilon_0+\alpha}^\diamond(\feps(n\dotminus 1))\leq\feps(n),
\end{equation*}
as required for Theorem \ref{thm:total-functions-slow-reflection-mod}.
\end{proof}

We remark that the theorem implies
\begin{equation*}
 \pa+\drfn\nvdash\rfn(\pa),
\end{equation*}
because the equivalence $\rfn(\pa)\leftrightarrow\feps\!\downarrow$ is provable in Peano Arithmetic (even in $\isigma_1$, as implied by Proposition \ref{prop:reflection-fragments-fast-growing}). The analogous result for slow consistency has been proved in \cite{rathjen13} (see also statement (\ref{eq:slow-con-weaker-con}) in Section \ref{sect:no-short-proofs-ph}). In \cite{freund-slow-reflection} we investigate the consistency strength of slow reflection (also for reflection formulas of complexity above $\Sigma_1$): In particular it is shown that $\pa+\drfn$ does not even prove the consistency of Peano Arithmetic. Further results on slow provability can be found in work of Henk and Pakhomov \cite{henk-pakhomov}. To conclude this paper, let us rephrase our computational analysis in terms of subrecursive degree theory (see \cite{kristiansen12}):

\begin{corollary}
The honest $\varepsilon_0$-elementary degree of $\feps^\diamond$ is a non-zero degree strictly below the degree of $\feps$.
\end{corollary}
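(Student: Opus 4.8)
The plan is to unwind the three assertions contained in the corollary: that $\feps^\diamond$ has an honest $\varepsilon_0$-elementary degree at all, that this degree is non-zero, and that it sits strictly below the degree of $\feps$. For the first point one checks that $\feps^\diamond$ is an honest function in the sense of \cite{kristiansen12}: its graph is $\Delta_0$ (hence elementary) by the definition of $\feps^\diamond$ together with the bound~(\ref{eq:feps-delta-zero}); it is monotone and satisfies $\feps^\diamond(n)>n^2$ by Lemma~\ref{lem:fepsstar-hierarchy-basic}(i),(ii) with $\alpha=0$; and it dominates $2^x$ via the inequalities $\feps^\diamond(n)\geq F_\omega(n)\geq F_2(n)=2^{n+1}\cdot(n+1)-1$ recalled from \cite[Lemma~2.3, Proposition~2.12]{rathjen13}. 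The same checks show that $\feps$ is honest (its graph is $\Delta_0$ in $\isigma_1$, and it is monotone and dominates the identity by \cite[Proposition~2.5]{solovay81}), so both functions have honest $\varepsilon_0$-elementary degrees and the remaining task is to compare them.

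To see that $\deg(\feps^\diamond)\neq\mathbf 0$ I would use that every function in the bottom degree is elementary in finitely many of the functions $F_\alpha$ with $\alpha<\varepsilon_0$, and hence is eventually dominated by some $F_\beta$ with $\beta<\varepsilon_0$. But the discussion following Proposition~\ref{prop:slow-reflection-unary-function} shows that $\feps^\diamond$ dominates $F_{\omega_{x+1}}$ on all arguments $\geq\feps(x)$, so $\feps^\diamond$ eventually dominates every $F_\alpha$ with $\alpha<\varepsilon_0$; in particular it eventually dominates $F_{\beta+1}$, which in turn strictly dominates $F_\beta$, so $\feps^\diamond$ cannot be eventually dominated by $F_\beta$. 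This contradiction gives $\feps^\diamond\notin\mathbf 0$. For $\deg(\feps^\diamond)\leq\deg(\feps)$, apply Proposition~\ref{prop:modified-hierarchy-dominated-feps} with $\alpha=0$ to get $\feps^\diamond(\feps(n\dotminus 1))\leq\feps(n)$ for all large $n$; combining this with $n\leq\feps(n\dotminus 1)$ and the monotonicity of $\feps^\diamond$ yields that $\feps^\diamond$ is eventually dominated by $\feps$. Since $\feps^\diamond$ has an elementary graph and $\feps$ is honest, the standard fact that the honest degree of a function contains every elementary-graph function that it eventually bounds puts $\feps^\diamond$ into $\deg(\feps)$, and this reducibility persists (indeed becomes easier) in the $\varepsilon_0$-relativised structure.

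The hard part will be the strictness $\deg(\feps)\not\leq\deg(\feps^\diamond)$. Here I would argue by contradiction: if $\feps$ were $\varepsilon_0$-elementary in $\feps^\diamond$, it would be elementary in $\feps^\diamond$ together with finitely many $F_\alpha$, $\alpha<\varepsilon_0$. Each such $F_\alpha$ is provably total in $\pa$, while $\feps^\diamond\!\downarrow$ is equivalent over $\isigma_1$ to $\drfn$ by Proposition~\ref{prop:slow-reflection-unary-function}; since provable totality over any extension of $\isigma_1$ is closed under elementary operations, it would follow that $\feps\!\downarrow$ is provable in $\pa+\drfn$. Applying Theorem~\ref{thm:total-functions-slow-reflection-mod} to $g=\feps$ then gives $\feps(\feps(n\dotminus 1))\leq\feps(n)$ for all sufficiently large $n$, which is absurd because $\feps$ is strictly increasing and eventually satisfies $\feps(n\dotminus 1)>n$, so that $\feps(\feps(n\dotminus 1))\geq\feps(n+1)>\feps(n)$ for large $n$. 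Hence $\deg(\feps^\diamond)<\deg(\feps)$, and the three points together prove the corollary. The only genuinely delicate aspect is the interface with \cite{kristiansen12}: one must fix a formalisation of the honest $\varepsilon_0$-elementary degrees in which $\mathbf 0$ is the elementary closure of $\{F_\alpha\mid\alpha<\varepsilon_0\}$ and in which each degree is characterised by the elementary-graph functions it bounds --- but that is precisely the framework set up there, and already invoked in the previous proposition.
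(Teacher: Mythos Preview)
Your proposal is correct and follows essentially the same route as the paper's proof. The main difference is one of packaging: where the paper invokes \cite[Theorem~16]{kristiansen12} as a black box to pass between the reducibilities $\leq_{\varepsilon_0 E}$ and $\leq_\pa$ (for both the non-zero claim and the strictness claim), you unfold the easy direction of that equivalence by hand---arguing that $\varepsilon_0$-elementary reducibility to $\feps^\diamond$ would yield provable totality in $\pa+\drfn$, and then deriving a contradiction from Theorem~\ref{thm:total-functions-slow-reflection-mod}. The paper in fact closes with a remark that this more direct route (via the Generalized Growth Theorem rather than Theorem~16) is available, so your argument is the one the author explicitly flags as an alternative; your caveat about the interface with \cite{kristiansen12} is well placed, since making the fundamental-sequence conventions match is the only real work left.
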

\begin{proof}
First we must verify that $\feps^\diamond$ and $\feps$ are honest functions (in the sense of \cite{kristiansen12}). We already know that the two functions are monotone and have elementary graphs (since they can be defined by $\Delta_0$-formulas). It remains to show that they dominate the function $n\mapsto 2^n$: By straightforward computations we see that $F_2(n)\geq 2^n$ holds for all $n$. Since $\omega_{m+1}\rightarrow_n 2$ holds for $n\geq 1$ and any $m$ we obtain
\begin{equation*}
\feps^\diamond(n)=F_{\omega_{\feps^{-1}(n)+1}}(n)\geq F_2(n)\geq 2^n\qquad\text{for $n\geq 1$}.
\end{equation*}
In the separate case $n=0$ the inequality $\feps^\diamond(0)\geq 2^0$ is immediate. Similarly one shows that $\feps(n)\geq 2^n$ holds for all $n$. Now let us argue that the $\varepsilon_0$-elementary degree of $\feps^\diamond$ is non-zero: In the discussion just after Proposition \ref{prop:slow-reflection-unary-function} above we have seen that $\feps^\diamond$ eventually dominates any provably total function of Peano Arithmetic. Thus Peano Arithmetic cannot prove the totality of any honest representation of $\feps^\diamond$. In the notation of \cite[Section 10]{kristiansen12} this means that we do not have $\feps^\diamond\leq_\pa\mathbf 0$. Then \cite[Theorem 16]{kristiansen12} tells us that we cannot have $\feps^\diamond\leq_{\epsilon_0 E}\mathbf 0$. In other words, the $\varepsilon_0$-elementary degree of $\feps^\diamond$ is non-zero. Similarly Theorem \ref{thm:total-functions-slow-reflection-mod} tells us that $\feps\leq_\pa\feps^\diamond$ must fail, so that $\feps\leq_{\varepsilon_0 E}\feps^\diamond$ must fail as well. Thus $\feps^\diamond$ and $\feps$ do not have the same $\varepsilon_0$-elementary degree. On the other hand it is easy to see $\feps^{\diamond}\leq_{\varepsilon_0 E}\feps$: Since $\feps^\diamond$ has an elementary graph and is dominated by $\feps$ it is even elementary in $\feps$, by bounded minimization.\\
We remark that the use of \cite[Theorem 16]{kristiansen12} is, in some sense, a detour: Rather than considering provability in Peano Arithmetic one could use the ``Generalized Growth Theorem" \cite[Theorem 13]{kristiansen12} in combination with Proposition \ref{prop:modified-hierarchy-dominated-feps} above. Then, however, one has the technical task to reconcile the different definitions of the fast-growing hierarchy in our paper and in \cite{kristiansen12}.
\end{proof}

\section*{Acknowledgements}

\noindent I am very grateful to Michael Rathjen, my Ph.D.\ supervisor, for his advise and guidance. I also want to thank the referee for his helpful comments, which particularly improved Section \ref{sect:provably-total-slow-reflection} of the paper.

\bibliographystyle{alpha}
\bibliography{Proof-Length-Paris-Harrington_Freund}

\end{document}